\documentclass[12pt,thmsa, reqno]{amsart}

\usepackage{amsmath} 
\usepackage{amssymb, euscript, amsthm, amsfonts} 

\usepackage{graphicx} 
\usepackage{multicol}
\usepackage{amscd,mathrsfs}
\usepackage{caption}


\def\th{\theta}

\def\Cal{\mathcal}

\def\D{{\EuScript{D}}}

\def\I{{\Cal I}}

\def\T{{\Cal T}}




\def\G{\mathcal{G}}

\def\Q{\mathcal{Q}}

\def\f0{f_0}
\def\Fc0{\varphi_0}
\def\rn{\bbr^n}

\def\I_k {I_{-}^{k/2}}
\def\I+k {I_{+}^{k/2}}

\def\bbr{{\Bbb R}}

\def\bbn{{\Bbb N}}
\def\bbh{{\Bbb H}}

\def\bbc{{\Bbb C}}
\def\bbz{{\Bbb Z}}

\def\bbe{{\Bbb E}}
\def \hn{{\bbh^n}}

\def\supp{{\hbox{\rm supp}}}

\def\const{{\hbox{\rm const}}}
\def\cos{{\hbox{\rm cos}}}

\def\ch{{\hbox{\rm cosh}}}
\def\sh{{\hbox{\rm sinh}}}

\def \hns {\overset {*}{\bbh}{}^n}

\def\part{\partial}
\def\intl{\int\limits}

\def\Gam{\Gamma}

\def\a{\alpha}
\def\om{\omega}

\def\del{\delta}
\def\vp{\varphi}

\def\gam{\gamma}

\def\sig{\sigma}
\def\lam{\lambda}

\def\e{\varepsilon}
\def\t{\tau}

\def\sn{S^{n-1}}

\font\frak=eufm10

\def\fr#1{\hbox{\frak #1}}

\def\frR{\fr{R}}        
\def\frS{\fr{S}}

\def\const{{\hbox{\rm const}}}
\def\cos{{\hbox{\rm cos}}}

\def\part{\partial}
\def\intl{\int\limits}

\def\Gam{\Gamma}

\def\a{\alpha}


\def\hn{\bbh^n}


\newtheorem{theorem}{Theorem}[section]
\newtheorem{lemma}[theorem]{Lemma}
\newtheorem{definition}[theorem]{Definition}

\newtheorem{corollary}[theorem]{Corollary}
\newtheorem{proposition}[theorem]{Proposition}

\theoremstyle{remark}
\newtheorem{remark}[theorem]{Remark}

\numberwithin{equation}{section}

\newcommand{\be}{\begin{equation}}
\newcommand{\ee}{\end{equation}}

\newcommand{\bea}{\begin{eqnarray}}
\newcommand{\eea}{\end{eqnarray}}
\newcommand{\Bea}{\begin{eqnarray*}}
\newcommand{\Eea}{\end{eqnarray*}}

\def\sideremark#1{\ifvmode\leavevmode\fi\vadjust{\vbox to0pt{\vss
 \hbox to 0pt{\hskip\hsize\hskip1em
\vbox{\hsize2cm\tiny\raggedright\pretolerance10000
 \noindent #1\hfill}\hss}\vbox to8pt{\vfil}\vss}}}%

                                                   %


\begin{document}

\title[ Null Spaces of  Radon Transforms]
{Null Spaces of Radon Transforms}

\author{Ricardo  Estrada}
\address{Department of Mathematics, Louisiana State University, Baton Rouge,
LA, 70803 USA} \email{restrada@math.lsu.edu}

\author{Boris Rubin }
\address{Department of Mathematics, Louisiana State University, Baton Rouge,
Louisiana 70803, USA}
\email{borisr@math.lsu.edu}


\subjclass[2010]{Primary 44A12; Secondary  26A33, 44A15}



\keywords{Gegenbauer-Chebyshev fractional integrals, Radon transforms, null spaces. }

\begin{abstract}

We obtain new  descriptions of the null spaces of several projectively equivalent transforms  in integral geometry.  The paper deals with the  hyperplane Radon transform, the totally geodesic transforms on the sphere and the hyperbolic space, the spherical slice transform, and the Cormack-Quinto spherical mean transform for spheres through the origin. The consideration extends to the corresponding dual transforms and  the relevant exterior/interior modifications.
The method relies on new results for the Gegenbauer-Chebyshev  integrals, which  generalize Abel type fractional integrals on the positive half-line.

 \end{abstract}

\maketitle

\section{Introduction}
\setcounter{equation}{0}

In the present article we solve open problems stated in  \cite{Ru14} and related to the structure of the kernels (null spaces) of several Radon-like transforms. These transforms are projectively equivalent to the hyperplane Radon transform of functions on $\rn$ and include
 the Funk transform on the sphere, its modification for  spherical slices through the  pole, the totally geodesic transform on the  hyperbolic space,  and the Cormack-Quinto  transform, which integrates functions on $\rn$ over  spheres passing through the origin.

All these transforms have  unilateral structure. This important fact allows us to neglect some singularities, which  restrict the corresponding classes of admissible functions. For example, in the case of the hyperplane Radon transform $Rf$ on $\rn$, we can exclude hyperplanes through the origin and  consider only   {\it almost all} hyperplanes in $\rn$ instead of {\it all} such planes. In such a setting, the behavior of $f$ at the origin becomes irrelevant, and therefore,
 our consideration covers not only the classical Radon transform, but also its exterior version, when both $f$ and $Rf$ are supported away from a fixed  ball.

For square integrable functions, the kernels of the exterior Radon transform  and the  interior spherical mean transform  were studied by Quinto \cite{Q82, Q83} who used the results of Ludwig \cite{Lud}; see also Cormack and   Quinto \cite{CoQ}. Our approach is different. It covers more general classes of functions and many other Radon-like transforms.  For example, we show that for any fixed $a\ge 0$, the kernel of the  Radon transform
  \[  (Rf) (\theta, t)=\intl_{\theta^{\perp}} f(t\theta +
u) \,d_\theta u, \quad   \theta \in \sn, \quad |t|>a,  \]
in the class of functions satisfying
\be \label {lkmuxAA}
\intl_{|x|>a_1}  \,\frac{|f(x)|} {|x|} \, dx<\infty \quad \forall \, a_1>a, \ee
is essentially  the set  of functions $\om_{j,m}(x)$ of the form
\[ \om_{j,m} (x)\!=\!|x|^{2j-m-n}Y_m(x/|x|); \qquad m\!=\!2,3, \ldots; \quad j\!=\!1,2,  \ldots, [m/2],\]
 where $[m/2]$ is the integer part of $m/2$, and $Y_m$ is a spherical harmonic of degree $m$. The fact that these functions are annihilated by the operator $R$ is not new; cf. \cite{Bom91, Bom09, Q82, Q83, Ru14}.  The crucial point and one of  the main results of the paper is that {\it $\om_{j,m} (x)$ exhaust the kernel of $R$ under the assumption} (\ref{lkmuxAA}); see Theorem  \ref{azw1a2} for the precise statement. Note that the assumption (\ref{lkmuxAA}) is pretty weak in the sense that it is necessary for the existence of the Radon transforms of radial functions; see Theorem  \ref {byvs1}  below.  Similar exact kernel descriptions are obtained for all Radon-like transforms mentioned above.

 The paper is organized as follows. Section 2 is devoted to Gegenbauer-Chebyshev fractional integrals that form a background of our approach. In Section 3 we  describe the kernels of the hyperplane Radon transform and its dual. The main results are stated in Theorems 3.7 and 3.6, which include the corresponding exterior and interior versions. Section 4 contains the description of the kernel of the Cormack-Quinto transform, which integrates functions on $\rn$ over spheres through the origin. Sections 5,6, and 7 contain similar results for the Funk transform on the sphere, the spherical slice transform, and  the totally geodesic transform on the  hyperbolic space, respectively. Our assumptions for functions are inherited from (\ref{lkmuxAA}) and provide the existence the corresponding Radon-like transforms in the Lebesgue sense.

 {\bf Notation.}
As usual,  $\bbz, \bbn, \bbr, \bbc$ denote the sets of all integers,
positive integers, real numbers, and complex numbers,
respectively; $\bbz_+ = \{ j \in \bbz: \ j \ge 0 \}$;  $ \bbr_+ = \{ a \in \bbr: \ a > 0 \}$.
We will be dealing with the following function spaces:\\
$C (\bbr_+)$ is the space of  continuous complex-valued functions on $\bbr_+$;
$ C_* (\bbr_+)=\{\vp \in C (\bbr_+):  \lim\limits_{t\to 0^+} \vp (t)<\infty; \; \sup\limits_{t> 0} t^k |\vp (t)|<\infty \;\forall k\in  \bbz_+\}$;
$ S(\bbr_+)=\{\vp \in C^\infty (\bbr_+):  \vp^{(\ell)}=(d/dt)^\ell \vp \in C_* (\bbr_+) \;\forall \ell\in  \bbz_+\}$;
$ C_c^\infty (\bbr_+) =\{\vp \in C^\infty (\bbr_+):  \supp \,\vp \; \text {\rm is compact and } \; 0\notin \supp \vp\}$.

In the following, $S^{n-1} = \{ x \in \bbr^n: \ |x| =1 \}$ is the unit sphere
in $\bbr^n= \bbr e_1 \oplus \cdots \oplus \bbr e_{n}$, where $e_1, \ldots,  e_{n}$ are the coordinate unit vectors.   For $\theta \in S^{n-1}$, $d\theta$ denotes  the surface element on $S^{n-1}$;  $\sigma_{n-1} =  2\pi^{n/2} \big/ \Gamma (n/2)$ is the surface area of $S^{n-1}$.  We set $d_*\theta= d\theta/\sigma_{n-1}$ for  the normalized surface element on $S^{n-1}$.

The letter $c$  denotes  an inessential positive constant that may vary at each  occurrence.

\section{Some Properties of the Gegenbauer-Chebyshev  Integrals}

  The  Gegenbauer polynomials $ C_m^\lam (t)$, $\lam > -1/2$, have the form
\be\label{kIIu}
   C_m^\lam (t)\!=\!\sum\limits_{j=0}^M c_{m,j} \, t^{m-2j}, \quad c_{m,j}\!= \!(-1)^j\,\frac{2^{m-2j}\,\Gam (m\!-\!j\!+\!\lam)}{\Gam (\lam)\, j!\,  (m\!-\!2j)!}, \ee
 where $M\!=\![m/2]$ is the integer part of $m/2$;  see \cite{Er}.  In the case  $\lam=0$, they are usually substituted by the Chebyshev polynomials $T_m (t)$.
If $|t|\le 1$, then
\be\label{kioxsru}  |C^\lam_m (t)|\le c\, \left\{\begin{array}{ll} 1,  & \mbox{if $m$ is even,}\\
|t|,  & \mbox{if $m$ is odd,} \qquad c\equiv c(\lam, m) =\const.\\
 \end{array}
\right.\ee
The same inequality holds for $T_m (t)$; cf. 10.9(18) and 10.11(22) in \cite{Er}.

\subsection{The right-sided  integrals}

The {\it right-sided Gegenbauer-Chebyshev integrals} of a function $f$ on $\bbr_+$ are defined by
\be \label {4gt6a}
(\G^{\lam, m}_{-} f)(t)=\frac{1}{c_{\lam,m}}\,\intl_t^\infty (r^2 - t^2)^{\lam -1/2}\, C^\lam_m \left (\frac{t}{r} \right )\, f (r)  \, r\, dr,\ee
\be \label {4gt6a1}  (\stackrel{*}{\G}\!{}_{\!-}^{\!\lam,m} f)(t)
=\frac{t}{c_{\lam,m}}\,\intl_t^\infty (r^2 - t^2)^{\lam -1/2}\, C^\lam_m \left (\frac{r}{t} \right )\, f (r)  \,\frac{dr}{r^{2\lam +1}},\ee
\be\label {gynko}
c_{\lam,m}=\frac{ \Gam (2\lam+m)\,\Gam (\lam+1/2)}{2 m!\, \Gam (2\lam)},  \qquad \lam >-1/2, \quad \lam \neq 0.\ee
 In the case  $\lam=0$, when the Gegenbauer polynomials are  substituted by the Chebyshev ones, we set
\be \label {4gt6at}
(\T_{-}^m f)(t)=\frac{2}{\sqrt{\pi}}\,\intl_t^\infty (r^2 - t^2)^{-1/2}\, T_m \left (\frac{t}{r} \right )\, f (r)  \, r\, dr,\ee
\be \label {4gt6a1t}  (\stackrel{*}{\T}\!{}_{\!-}^m f)(t)
=\frac{2t}{\sqrt{\pi}}\,\intl_t^\infty (r^2 - t^2)^{-1/2}\, T_m \left (\frac{r}{t} \right )\, f (r)  \,\frac{dr}{r}.\ee

In the following, all statements  are presented for the case of Gegenbauer polynomials  $C_m^\lam (t)$.  The corresponding statements for the Chebyshev polynomials can be formally obtained by setting $\lam=0$ and proved similarly.

\begin{proposition} \label{lo8xw} \cite[Proposition 3.1]{Ru14}
     Let $a>0$, $\lam >-1/2$. The integrals $(\G^{\lam, m}_{-} f)(t)$ and $(\stackrel{*}{\G}\!{}_{\!-}^{\!\lam,m} f)(t)$ are finite for almost all $t>a$ under the following conditions.

{\rm (i)} For $(\G^{\lam, m}_{-} f)(t)$:
\be\label{for10zg} \intl_a^\infty |f(t)|\, t^{2\lam-\eta }\, dt
<\infty,\qquad \eta=\left \{\begin{array} {ll} 0 &\mbox{if $m$ is even,}\\
1 &\mbox{if $m$ is odd.}\end{array}\right.\ee

{\rm (ii)} For $(\stackrel{*}{\G}\!{}_{\!-}^{\!\lam,m} f)(t)$:
\be\label{for10zgd} \intl_a^\infty |f(t)|\, t^{m-2}\, dt
<\infty.\ee
 \end{proposition}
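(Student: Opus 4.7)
The plan is to establish both parts via Tonelli's theorem applied to $|f|$. Fix an arbitrary $T > a$; it suffices to show $\intl_a^T |(\G^{\lam, m}_{-} f)(t)|\, dt < \infty$ (and analogously for $\stackrel{*}{\G}\!{}_{\!-}^{\!\lam,m} f$), since then the integrand is finite for a.e.\ $t \in (a, T)$, and letting $T$ run through a sequence tending to infinity gives a.e.\ finiteness on all of $(a, \infty)$.

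For part (i), the argument of the Gegenbauer polynomial satisfies $0 < t/r < 1$ throughout the integration region, so (\ref{kioxsru}) gives $|C^\lam_m(t/r)| \le c\,(t/r)^\eta$ with $\eta$ as in (\ref{for10zg}). By Tonelli,
$$\intl_a^T |(\G^{\lam, m}_{-} f)(t)|\, dt \le c \intl_a^\infty |f(r)|\, r\, J(r)\, dr, \qquad J(r) = \intl_a^{\min(r,T)} (r^2 - t^2)^{\lam - 1/2}(t/r)^\eta\, dt.$$
I would split the outer integral at $r = 2T$. For $r \in (a, 2T)$, the substitution $t = rs$ gives $J(r) = r^{2\lam}\intl_{a/r}^1 (1-s^2)^{\lam - 1/2} s^\eta\, ds \le c\, r^{2\lam}$, the inner beta-type integral converging precisely because $\lam > -1/2$. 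For $r \ge 2T$, the bound $(r^2 - t^2)^{\lam - 1/2} \le c\, r^{2\lam - 1}$, uniformly valid for $t \le T \le r/2$ (whether $\lam \ge 1/2$ or $-1/2 < \lam < 1/2$), combined with $\intl_a^T t^\eta\, dt = O(1)$, yields $J(r) \le c\, r^{2\lam - 1 - \eta}$. In both regimes the resulting $r$-integrand is dominated by $c\,|f(r)|\, r^{2\lam - \eta}$ times a locally bounded factor, whose integrability is exactly the hypothesis (\ref{for10zg}).

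Part (ii) is handled analogously, the difference being that now $r/t > 1$, so (\ref{kioxsru}) does not apply. Instead, since $C^\lam_m$ is a polynomial of degree $m$, I use $|C^\lam_m(r/t)| \le c\,(r/t)^m$ on $[1, \infty)$. After substituting this estimate into the defining integral, the weights combine into $t^{1-m}\, r^{m - 2\lam - 1}$, and Tonelli reduces the claim to estimating $\intl_a^\infty |f(r)|\, r^{m - 2\lam - 1}\, K(r)\, dr$, where $K(r) = \intl_a^{\min(r,T)} t^{1-m}(r^2 - t^2)^{\lam - 1/2}\, dt$. For $r \ge 2T$ one gets $K(r) \le c\, r^{2\lam - 1}\intl_a^T t^{1-m}\, dt = O(r^{2\lam - 1})$, so the tail contributes $\intl_{2T}^\infty |f(r)|\, r^{m - 2}\, dr$, finite by (\ref{for10zgd}); the range $r \in (a, 2T)$ is handled by the same $t = rs$ substitution as before, giving an integrand locally comparable to $|f(r)|\, r^{m-1}$, which is again absorbed by (\ref{for10zgd}).

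The crucial technical point, common to both parts, is the local integrability of $(r^2 - t^2)^{\lam - 1/2}$ near the diagonal $t = r$, guaranteed precisely by $\lam > -1/2$; growth in $r$ is then controlled by the polynomial bounds on $C^\lam_m$ (small argument in (i), large argument in (ii)). The Chebyshev case ($\lam = 0$, with $T_m$ replacing $C^\lam_m$) proceeds verbatim, since $T_m$ obeys the same size bounds noted after (\ref{kioxsru}).
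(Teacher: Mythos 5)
Your argument is correct. Note first that the paper itself gives no proof of this proposition; it is quoted verbatim from \cite[Proposition 3.1]{Ru14}, so there is no in-paper argument to compare against. Your Tonelli-based scheme is the natural one and all the estimates check out: in (i) the bound $|C^\lam_m(t/r)|\le c\,(t/r)^\eta$ from (\ref{kioxsru}) turns the inner integral into a beta-type integral $r^{2\lam}\int(1-s^2)^{\lam-1/2}s^\eta\,ds$ near the diagonal (convergent precisely for $\lam>-1/2$) and into $O(r^{2\lam-1-\eta})$ for $r\ge 2T$, and both regimes are absorbed by (\ref{for10zg}); in (ii) the crude degree bound $|C^\lam_m(s)|\le c\,|s|^m$ for $|s|\ge 1$ (valid because $C^\lam_m$ contains only the powers $s^{m-2j}$) makes the $r$-weight $r^{m-2\lam-1}$ cancel against the $O(r^{2\lam-1})$ tail estimate to give exactly the weight $r^{m-2}$ of (\ref{for10zgd}), while the factor $t^{1-m}$ is harmless since $t\ge a>0$. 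The only cosmetic point is that what you actually bound is $\int_a^T(\G^{\lam,m}_{-}|f|)(t)\,dt$, i.e.\ the double integral of the absolute value of the integrand, which is indeed what is needed to conclude absolute convergence of the defining integral for a.e.\ $t$; and the exhaustion by $T\to\infty$ correctly upgrades this to a.e.\ finiteness on $(a,\infty)$.
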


 \begin{proposition} \label{lo8xw91} If $f\in S(\bbr_+)$, then $\G^{\lam, m}_{-}f$ is an infinitely differentiable function on $\bbr_+$ such that $t^{\gam -1} \G^{\lam, m}_{-}f \in L^1 (\bbr_+)$ for all $\gam>0$.
\end{proposition}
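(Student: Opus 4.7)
The plan is to use the change of variables $r=\sqrt{t^2+s}$ to turn the $t$-dependent endpoint singularity into a fixed, integrable weight. Since $r\,dr=\tfrac{1}{2}ds$ and $(r^2-t^2)^{\lam-1/2}=s^{\lam-1/2}$, the definition of $\G^{\lam,m}_{-}$ becomes
\[
(\G^{\lam,m}_{-} f)(t)=\frac{1}{2c_{\lam,m}}\intl_0^\infty s^{\lam-1/2}\,C^\lam_m\!\left(\frac{t}{\sqrt{t^2+s}}\right)\, f\!\left(\sqrt{t^2+s}\right) ds,
\]
where $s^{\lam-1/2}$ is integrable near $0$ because $\lam>-1/2$, and the rest of the integrand is smooth in $(t,s)\in\bbr_+\times[0,\infty)$.

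\emph{Smoothness.} The main technical step is justifying the interchange of $\part_t^k$ with the $s$-integral. Each $t$-derivative of the integrand is a finite sum of products of bounded smooth functions of $t/\sqrt{t^2+s}$ (and $t$-derivatives thereof) with $f^{(j)}(\sqrt{t^2+s})$ for $j\le k$. On any compact subinterval $[t_1,t_2]\subset\bbr_+$ these are dominated by $C_N\,s^{\lam-1/2}(1+s)^{-N}$ for arbitrary $N$, since every derivative of $f\in S(\bbr_+)$ is rapidly decreasing. Dominated convergence then yields $\G^{\lam,m}_{-}f\in C^\infty(\bbr_+)$. The substitution $r=\sqrt{t^2+s}$ is precisely what makes this argument work, by absorbing the apparent singularity into a fixed $t$-independent weight.

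\emph{Weighted $L^1$ bound.} For the second claim I use the alternative rescaling $r=t\sqrt{1+\t}$ in the original integral, which gives
\[
(\G^{\lam,m}_{-} f)(t)=\frac{t^{2\lam+1}}{2c_{\lam,m}}\intl_0^\infty \t^{\lam-1/2}\,C^\lam_m\!\left(\tfrac{1}{\sqrt{1+\t}}\right) f(t\sqrt{1+\t})\,d\t.
\]
Using the uniform bound $|C^\lam_m(1/\sqrt{1+\t})|\le c$ from (\ref{kioxsru}) and Fubini gives
\[
\intl_0^\infty t^{\gam-1}|(\G^{\lam,m}_{-}f)(t)|\,dt\le C\intl_0^\infty \t^{\lam-1/2}\intl_0^\infty t^{\gam+2\lam}|f(t\sqrt{1+\t})|\,dt\,d\t.
\]
Substituting $u=t\sqrt{1+\t}$ in the inner integral produces the factor $(1+\t)^{-(\gam+2\lam+1)/2}\intl_0^\infty u^{\gam+2\lam}|f(u)|\,du$; the $u$-integral is finite by the rapid decay of $f$ and the fact that $\gam+2\lam>-1$ (since $\gam>0$ and $\lam>-1/2$). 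The remaining $\t$-integral $\intl_0^\infty \t^{\lam-1/2}(1+\t)^{-(\gam+2\lam+1)/2}\,d\t$ converges at $0$ because $\lam>-1/2$ and at $\infty$ because the integrand decays like $\t^{-(\gam/2+1)}$, completing the proof.
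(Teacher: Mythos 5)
Your proof is correct and follows essentially the same route as the paper: the paper's single substitution $r=ts$ plays the role of both of your changes of variables (your $r=t\sqrt{1+\tau}$ is exactly $r=ts$ with $s=\sqrt{1+\tau}$), after which smoothness and the weighted $L^1$ bound follow by differentiation under the integral, Tonelli, and the bound (\ref{kioxsru}), just as you argue. You are simply supplying the details that the paper dismisses as ``easily seen'' and ``straightforward.''
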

 \begin{proof} The infinite differentiability of $ (\G^{\lam, m}_{-}f)(t)$ is easily seen if we write
\[
(\G^{\lam, m}_{-}f)(t)=\frac {t^{2\lam +1} }{c_{\lam,m}}\,\intl_1^\infty (s^2 - 1)^{\lam -1/2}\, C^\lam_m \left (\frac{1}{s} \right )\, f (ts)  \, s\, ds.\]
  The second statement can be checked straightforward by making use of (\ref{kioxsru}).
\end{proof}

\begin{proposition} \label{lo8xw92} The operator $\stackrel{*}{\G}\!{}_{\!-}^{\!\lam,m}$ is injective on the class of continuous functions $f$ on $\bbr_+$ satisfying
\be\label{foNNgd}
t^{\gam -1} f(t) \in  L^1 (\bbr_+) \quad \text{for some}\quad\gam >m-1.\ee
\end{proposition}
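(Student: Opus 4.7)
The plan is to apply the Mellin transform to the equation $\stackrel{*}{\G}\!{}_{\!-}^{\!\lam,m} f\equiv 0$ and reduce the injectivity to Fourier uniqueness.

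First I would observe that the hypothesis $t^{\gam-1}f(t)\in L^1(\bbr_+)$ with $\gam>m-1$ makes the Mellin transform
\[
\tilde f(s)=\intl_0^\infty t^{s-1}f(t)\,dt
\]
absolutely (hence uniformly) convergent on the vertical line $\Re s=\gam$, so $\tilde f$ is continuous there. The hypothesis is also stronger than (\ref{for10zgd}), so Proposition \ref{lo8xw}(ii) guarantees that $\stackrel{*}{\G}\!{}_{\!-}^{\!\lam,m} f$ is pointwise defined. Interchanging the order of integration by Fubini (a routine bookkeeping estimate, justified by the same hypothesis via the substitution $u=t/r$), and then applying $v=u^2$ and the polynomial expansion $C_m^\lam(1/u)=u^{-m}\sum_{j=0}^M c_{m,j}u^{2j}$, I would derive a multiplier identity
\[
\widetilde{\stackrel{*}{\G}\!{}_{\!-}^{\!\lam,m} f}(s)=M(s)\,\tilde f(s),\quad M(s)=\frac{1}{2c_{\lam,m}}\intl_0^1 v^{(s-m-1)/2}(1-v)^{\lam-1/2}P(v)\,dv,
\]
where $P(v)=\sum_{j=0}^M c_{m,j}v^j$; the integral converges (and depends holomorphically on $s$) precisely in the half-plane $\{\Re s>m-1\}$, which contains the line of interest.

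The crux of the argument is the non-vanishing of $M$ on the line $\Re s=\gam$. Here I would exploit the identity $P(v)=v^{m/2}C_m^\lam(v^{-1/2})$, valid for $v>0$, together with the observation that $v^{-1/2}\ge 1$ when $v\in(0,1]$. Because $C_m^\lam$ is orthogonal on $[-1,1]$ with the positive weight $(1-x^2)^{\lam-1/2}$, all of its zeros lie in the open interval $(-1,1)$, and the same is true for the Chebyshev polynomial $T_m$ in the $\lam=0$ case. Consequently $C_m^\lam$ has constant sign on $[1,\infty)$, so $P$ has constant sign on $(0,1]$, and for every real $\gam>m-1$ the integrand defining $M(\gam)$ keeps a fixed sign; hence $M(\gam)\neq 0$. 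Being holomorphic in the half-plane and not identically zero, $M$ has only isolated zeros on the line $\Re s=\gam$.

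To conclude, $\stackrel{*}{\G}\!{}_{\!-}^{\!\lam,m} f\equiv 0$ implies $M(s)\tilde f(s)\equiv 0$ on that line, and continuity of $\tilde f$ together with the discreteness of the zeros of $M$ propagates the vanishing to $\tilde f\equiv 0$ on the whole line. Substituting $t=e^x$ identifies $\tilde f(\gam-i\tau)$ with the Fourier transform of $g(x)=e^{\gam x}f(e^x)\in L^1(\bbr)$; Fourier uniqueness then yields $g\equiv 0$ a.e., and the continuity of $f$ gives $f\equiv 0$. I expect the main obstacle to be the sign analysis of $P$ on $(0,1]$, since everything else (the Mellin factorization, the Fubini justification, and the final Fourier uniqueness step) is standard once the polynomial identity and the location of zeros of $C_m^\lam$ are in hand.
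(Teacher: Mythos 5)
Your proof is correct, and its skeleton is the same as the paper's: both recognize $\stackrel{*}{\G}\!{}_{\!-}^{\!\lam,m}f$ as a Mellin convolution and reduce injectivity to the non-vanishing of the Mellin symbol on a vertical line $\Re z=\gam$, $\gam>m-1$, followed by Mellin/Fourier uniqueness. The difference lies entirely in how the symbol is handled. The paper evaluates it in closed form, citing formula 2.21.2(25) of Prudnikov--Brychkov--Marichev, obtaining the ratio of Gamma functions $\tilde g(z)=\Gam\bigl(\frac{z+1-m}{2}\bigr)\Gam\bigl(\lam+\frac{z+1+m}{2}\bigr)\big/\Gam\bigl(\lam+\frac{z+1}{2}\bigr)\Gam\bigl(\lam+\frac{z+2}{2}\bigr)$, from which the (at most finitely many, real) zeros are visible at a glance. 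You avoid the table lookup: you prove $M(\gam)\neq 0$ at real points by combining the identity $P(v)=v^{m/2}C_m^\lam(v^{-1/2})$ with the classical fact that the zeros of an orthogonal polynomial lie in the open interval of orthogonality, so that the integrand has fixed sign; you then handle possible zeros of $M$ at non-real points of the line via holomorphy, discreteness of zeros, and continuity of $\tilde f$. Your route is more self-contained and, to its credit, makes explicit a point the paper glosses over (the published "inversion formula" is stated loosely and tacitly assumes the symbol does not vanish on the contour); the paper's route is shorter and gives the precise location of the exceptional zeros for free. Both arguments are sound, and your Fubini justification and the convergence analysis of $M(s)$ near $v=0$ (using $P(0)=c_{m,0}\neq 0$) are exactly what is needed.
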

 \begin{proof}  We write $\stackrel{*}{\G}\!{}_{\!-}^{\!\lam,m}f$ as a Mellin convolution
\[(\stackrel{*}{\G}\!{}_{\!-}^{\!\lam,m}f)(t)\!= \!\intl_0^\infty g(s) \,f\left (\frac{t}{s}\right )\, \frac{ds}{s},\quad g(s)\!=\!\frac{s}{c_{\lam,m}}\,(1\!-\!s^2)_+^{\lam -1/2}\, C^\lam_m \left (\frac{1}{s} \right ).\]
\[ \]
By the formula 2.21.2(25) from \cite{PBM2},
\bea
\tilde g(z)&=&\intl_0^\infty g(s) \,s^{z-1}\, ds\nonumber\\
&=& \frac{\displaystyle{\Gam \left(\frac{z\!+\!1\!-\!m}{2}\right)\,\Gam \left(\lam \!+\!\frac{z\!+\!1\!+\!m}{2}\right)}}{ \displaystyle{\Gam \left(\lam\!+\!\frac{z+1}{2}\right)\, \Gam \left(\lam\!+\!\frac{z\!+\!2}{2}\right)}}, \qquad Re\, z>m-1. \nonumber\eea
Hence, $f$ is uniquely reconstructed by the Mellin inversion formula (see, e.g., \cite{Mari})
\[
f(t)=\frac{1}{2\pi i}  \intl_{\gam -i\infty}^{\gam +i\infty}  \tilde f(z) \, \tilde g(z)\, dz, \qquad \gam >m-1,\]
which gives the desired injectivity result.
 \end{proof}

We will also need the Riemann-Liouville fractional integrals
\be \label {RLint}(I^\a_{-}f) (t) = \frac{1}{\Gamma (\alpha)} \intl_t^{\infty}
\frac{f(s) \,ds} {(s-t)^{1- \alpha}}, \qquad t>0, \quad \a>0.\ee
 The corresponding operators of fractional differentiation, which are defined as the left inverses of $I^\a_{-}$, will be denoted by $D^\a_{-}$.

\begin{proposition} \label{blo8xw} The operator $I^\a_{-}$ is a bijection of $S(\bbr_+)$ onto itself.
\end{proposition}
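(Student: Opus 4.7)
The plan is to establish bijectivity by first verifying that $I^\a_{-}$ maps $S(\bbr_+)$ into itself and then exhibiting an explicit two-sided inverse built from elementary differentiation and a lower-order Riemann--Liouville integral.

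First I would use the substitution $s=t+u$ to rewrite
\[
(I^\a_{-} f)(t)=\frac{1}{\Gam(\a)}\,\intl_0^\infty u^{\a-1}\, f(t+u)\,du,
\]
valid for every $f\in S(\bbr_+)$. Differentiation under the integral sign yields $(I^\a_{-} f)^{(\ell)}(t)= (I^\a_{-} f^{(\ell)})(t)$ for all $\ell\in\bbz_+$, so $I^\a_{-} f \in C^\infty(\bbr_+)$ and $d/dt$ commutes with $I^\a_{-}$ on $S(\bbr_+)$. Each derivative has a finite limit at $0^+$, since $u^{\a-1}$ is locally integrable at the origin while $f^{(\ell)}$ is bounded near $0$ and rapidly decreasing at infinity. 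For rapid decay at infinity I would split the integral at $u=t$; using $|f^{(\ell)}(t+u)|\le c_{\ell,N}(t+u)^{-N}$ for arbitrary $N>\a$, direct estimation of both pieces gives $|(I^\a_{-} f)^{(\ell)}(t)|\le c_{\ell,N}\, t^{\a-N}$, so every $t^k(I^\a_{-} f)^{(\ell)}(t)$ is bounded. Hence $I^\a_{-} f\in S(\bbr_+)$.

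Next I would record the semigroup identity $I^\a_{-} I^\b_{-}= I^{\a+\b}_{-}$ on $S(\bbr_+)$, which follows from Fubini's theorem and the Beta integral in the classical way. Combined with the trivial relation $(d/dt)\,I^1_{-} f = -f$ and iteration, this yields $(d/dt)^n\, I^n_{-} f = (-1)^n f$ for every $n\in\bbn$. Injectivity is then immediate: if $I^\a_{-} f=0$, choose any integer $n>\a$; then $I^n_{-} f = I^{n-\a}_{-} I^\a_{-} f = 0$, and applying $(d/dt)^n$ gives $f=0$.

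For surjectivity, given $g\in S(\bbr_+)$ and integer $n>\a$, I would define
\[
D^\a_{-} g := (-1)^n \frac{d^n}{dt^n}\,I^{n-\a}_{-} g,
\]
which belongs to $S(\bbr_+)$ by the first step and closure of $S(\bbr_+)$ under differentiation. Using the commutation of $d/dt$ with $I^\a_{-}$ and the semigroup property,
\[
I^\a_{-}\,D^\a_{-} g = (-1)^n (d/dt)^n I^\a_{-} I^{n-\a}_{-} g = (-1)^n (d/dt)^n I^n_{-} g = g,
\]
so $I^\a_{-}$ is onto and $D^\a_{-}$ is its two-sided inverse. The main obstacle is the rapid-decay verification for $(I^\a_{-} f)^{(\ell)}(t)$ as $t\to\infty$, which has to hold uniformly in $\ell$; once that estimate is in hand, the remaining steps are a routine exercise in the algebra of fractional integrals within the Schwartz-type class $S(\bbr_+)$.
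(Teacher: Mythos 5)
Your proof is correct and follows essentially the same route as the paper: rewrite $(I^\a_{-}f)(t)=\frac{1}{\Gamma(\a)}\int_0^\infty u^{\a-1}f(t+u)\,du$ to see that $I^\a_{-}$ preserves $S(\bbr_+)$, invert it by $D^\a_{-}=(-1)^n(d/dt)^n I^{n-\a}_{-}$, and obtain surjectivity by exhibiting the preimage $(-1)^n I^{n-\a}_{-}\vp^{(n)}$. You are somewhat more self-contained than the paper (which cites the standard injectivity of $I^\a_{-}$ from the fractional-calculus literature and does not spell out the rapid-decay estimate at infinity), but the underlying argument is the same.
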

\begin{proof} This fact is not new, and the proof is given for the sake of completeness. Let $f\in S(\bbr_+)$,
\[\vp (t)=(I^\a_{-}f) (t) =\frac{1}{\Gamma (\alpha)} \intl_0^{\infty} s^{\alpha -1}\,
f(s+t)\, ds.\]
This function is infinitely differentiable for $t>0$ and all derivatives $\vp^{(\ell)} (t)=(I^\a_{-}f^{(\ell)}) (t)$ have finite limit as
$t\to 0^+$. Thus, $I^\a_{-}f \in  S(\bbr_+)$. The injectivity of $I^\a_{-}$ is a standard fact from Fractional Calculus; see, e.g.,
\cite{Ru96, SKM}, so that $D^\a_{-}  I^\a_{-}f =f$. Here  $D^\a_{-} $ has different forms, for instance, $(D^\a_{-} \vp)(t)= (-d/dt)^m (I^{m-\a}_{-} \vp)(t)$ for any integer $m>\a$.
Conversely, given any $\vp\in S(\bbr_+)$ and $\a>0$, for any integer $m>\a$ we have
\[\vp (t)= -\intl_t^{\infty} \vp' (s)\, ds= \cdots = (-1)^m (I^m_{-} f^{(m)})(t)=(I^\a_{-}\psi) (t),\]
where $\psi=(-1)^m I^{m-\a}_{-} f^{(m)}\in  S(\bbr_+)$ by the first part of the proof. Hence,  $I^\a_{-}:  S(\bbr_+) \to  S(\bbr_+)$ is surjective.
\end{proof}

\begin{proposition} \label{89n6g} \cite [Lemma 3.4]{Ru14}  Let $\lam >-1/2$, $m\ge 2$. Suppose that
\be\label{for10zgn} \intl_a^\infty |f(t)|\, t^{2\lam +m-1}\, dt<\infty\quad \forall \,a>0.\ee
 Then for almost all $t>0$,
\be\label{8vcmk}
(\stackrel{*}{\G}\!{}_{\!-}^{\!\lam,m}  \G^{\lam, m}_{-} f)(t)=2^{2\lam +1}(I_-^{2\lam+1} f)(t).\ee
\end{proposition}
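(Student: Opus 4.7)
The plan is to identify the integral kernel of the composition $\stackrel{*}{\G}\!{}_{\!-}^{\!\lam,m}\G^{\lam, m}_{-}$ by passing to Mellin transforms, rather than evaluating the resulting nested integral by hand.

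First, I would substitute both definitions into $(\stackrel{*}{\G}\!{}_{\!-}^{\!\lam,m}\G^{\lam,m}_{-}f)(t)$. The hypothesis (\ref{for10zgn}) (which is tailored to dominate the $s^m$-type growth of $C^\lam_m(s/t)$ for $s/t\ge 1$) together with the bound (\ref{kioxsru}) for $C^\lam_m(s/r)$ on $[0,1]$ implies that $\G^{\lam,m}_{-}f$ meets the hypothesis (\ref{for10zgd}) of Proposition \ref{lo8xw}(ii), so Fubini applies and yields
\begin{equation*}
(\stackrel{*}{\G}\!{}_{\!-}^{\!\lam,m}\G^{\lam,m}_{-}f)(t) = \intl_t^\infty K_{\lam,m}(t,r)\,f(r)\,dr
\end{equation*}
for an explicit but unwieldy kernel
\begin{equation*}
K_{\lam,m}(t,r) = \frac{t\,r}{c_{\lam,m}^{2}}\intl_t^r s^{-2\lam-1}(s^2-t^2)^{\lam-1/2}(r^2-s^2)^{\lam-1/2}C^\lam_m(s/t)\,C^\lam_m(s/r)\,ds.
\end{equation*}

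Second, I would identify $K_{\lam,m}$ via Mellin transforms. Proposition \ref{lo8xw92} already identifies $\stackrel{*}{\G}\!{}_{\!-}^{\!\lam,m}$ as a Mellin convolution with multiplier $\tilde g(z)$ displayed there. After the substitution $r=t/u$, the operator $\G^{\lam,m}_{-}$ factors as multiplication by $t^{2\lam+1}$ followed by a Mellin convolution whose multiplier
\[
h(z) = c_{\lam,m}^{-1}\intl_0^1 u^{z-1}(1-u^2)^{\lam-1/2}C^\lam_m(u)\,du
\]
can be read off from the companion entries of \cite{PBM2}, Section 2.21.2. The Riemann--Liouville operator $I^{2\lam+1}_{-}$ likewise factors as $t^{2\lam+1}$ times a Mellin convolution with multiplier $\Gam(z)/\Gam(z+2\lam+1)$. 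For test inputs $f\in S(\bbr_+)$ (where both sides of (\ref{8vcmk}) lie in $S(\bbr_+)$ by Propositions \ref{lo8xw91} and \ref{blo8xw}), multiplicativity of the Mellin transform reduces (\ref{8vcmk}) to the gamma identity
\[
\tilde g(z)\,h(z) = \frac{2^{2\lam+1}\Gam(z)}{\Gam(z+2\lam+1)},
\]
which follows from the Legendre duplication formula $\Gam(w)\Gam(w+1/2)=2^{1-2w}\sqrt{\pi}\,\Gam(2w)$. Crucially, all the $m$-dependence in $\tilde g(z)$ and $h(z)$ cancels, pinning down $K_{\lam,m}(t,r) = 2^{2\lam+1}(r-t)^{2\lam}/\Gam(2\lam+1)$.

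Third, I would transfer the identity back to general $f$. Since $K_{\lam,m}(t,r)$ is a function of $(t,r)$ alone, independent of $f$, its agreement with $2^{2\lam+1}(r-t)^{2\lam}/\Gam(2\lam+1)$ is proved once on the dense class $S(\bbr_+)$ (via Mellin uniqueness); the integral representation of Step 1 then immediately propagates (\ref{8vcmk}) to every $f$ satisfying (\ref{for10zgn}), the identity holding for almost all $t>0$.

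The main obstacle is the explicit computation of $h(z)$: because $C^\lam_m$ has $[m/2]+1$ nonzero terms, $h(z)$ is \emph{a priori} a sum of that many beta integrals, and its collapse to a single ratio of gammas---precisely compensating the $m$-dependence of $\tilde g(z)$---is the step where the specific structure of Gegenbauer (and Chebyshev) polynomials is essential. The remarkable $m$-independence of the right-hand side of (\ref{8vcmk}) is not visible from any generic growth or integrability estimate and reflects this hidden cancellation.
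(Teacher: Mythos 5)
The paper does not actually prove Proposition \ref{89n6g}; it is imported verbatim from \cite[Lemma 3.4]{Ru14}, so there is no in-paper argument to compare yours against. Judged on its own, your Mellin-multiplier route is sound and is in fact the natural continuation of the technique the paper already uses in Proposition \ref{lo8xw92}. The key identity checks out: writing $\G^{\lam,m}_{-}$ as multiplication by $t^{2\lam+1}$ composed with a Mellin convolution, one gets $\widetilde{(\stackrel{*}{\G}{}_{\!-}^{\!\lam,m}\G^{\lam,m}_{-}f)}(z)=\tilde g(z)\,h(z)\,\tilde f(z+2\lam+1)$ and $\widetilde{(I_-^{2\lam+1}f)}(z)=\Gam(z)\Gam(z+2\lam+1)^{-1}\tilde f(z+2\lam+1)$, and after Legendre duplication the required identity becomes
\[
h(z)=\frac{\Gam(z/2)\,\Gam\bigl(\tfrac{z+1}{2}\bigr)}{\Gam\bigl(\tfrac{z+1-m}{2}\bigr)\,\Gam\bigl(\lam+\tfrac{z+1+m}{2}\bigr)},
\]
which is exactly the PBM evaluation of $c_{\lam,m}^{-1}\intl_0^1u^{z-1}(1-u^2)^{\lam-1/2}C^\lam_m(u)\,du$ (I confirmed it for $(\lam,m)=(\tfrac12,0)$ and $(\tfrac12,2)$, where the kernel indeed collapses to $4(r-t)$). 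What your write-up buys is a conceptual explanation of the $m$-independence; what it leaves implicit is precisely the computation it defers to the tables, and that is where all the content of the lemma sits.

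Two places need tightening before this is a complete proof. First, the Fubini step: you should actually run the estimate showing the iterated integral with $|f|$ converges; using $|C^\lam_m(s/t)|\le c(s/t)^m$ for $s\ge t$ and (\ref{kioxsru}) for $s/r\le 1$, the inner $s$-integral over $(t,r)$ is $O(r^{2\lam+m-2})$ for $r\gg t$, so after multiplying by $r|f(r)|$ the hypothesis (\ref{for10zgn}) is exactly what makes the double integral finite --- this is worth displaying, since it also explains the weight $t^{2\lam+m-1}$. Second, the transfer from $S(\bbr_+)$ to general $f$ should be phrased through the kernel rather than through the operator: $K_{\lam,m}$ is homogeneous, $K_{\lam,m}(t,r)=t^{2\lam}\kappa_{\lam,m}(r/t)$, so the Mellin computation identifies the one-variable function $\kappa_{\lam,m}$ on $(1,\infty)$ by Mellin uniqueness for locally integrable functions; once $K_{\lam,m}(t,r)=2^{2\lam+1}(r-t)^{2\lam}/\Gam(2\lam+1)$ is known pointwise, (\ref{8vcmk}) for every $f$ satisfying (\ref{for10zgn}) follows from Step 1 alone. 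With those two points filled in, your argument stands as a self-contained proof of the cited lemma.
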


\begin{definition} \label{DEF6g55} We denote by $\D_m (\bbr_+)$ the set of all functions $\vp\in C_c^\infty (\bbr_+)$ satisfying the moment conditions
\be\label{8vcmk11} \intl_0^\infty r^{m-2k} \, \vp (r)\, dr=0\quad \forall \;1\le k\le M, \quad M=[m/2].\ee
\end{definition}

\begin{proposition} \label{89n6g55} If $\vp \in \D_m (\bbr_+)$, then $\stackrel{*}{\G}\!{}_{\!-}^{\!\lam,m} \vp \in S(\bbr_+)$. Moreover, if
 $\supp \,\vp \subset [a,b]$, $0<a<b<\infty$, then $(\stackrel{*}{\G}\!{}_{\!-}^{\!\lam,m} \vp)(t)=0$ for all $t>b$.
\end{proposition}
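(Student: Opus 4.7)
My plan is to handle the two assertions separately. The support claim is immediate from the defining integral (\ref{4gt6a1}): whenever $t\ge b$, every integration variable $r$ satisfies $r>t\ge b$ and lies outside $\supp\vp\subset[a,b]$, so the integrand vanishes identically and $(\stackrel{*}{\G}\!{}_{\!-}^{\!\lam,m}\vp)(t)=0$. For the main claim $\stackrel{*}{\G}\!{}_{\!-}^{\!\lam,m}\vp\in S(\bbr_+)$, I would first substitute $r=tw$ in (\ref{4gt6a1}) to obtain
\[
(\stackrel{*}{\G}\!{}_{\!-}^{\!\lam,m}\vp)(t)=\frac{1}{c_{\lam,m}}\intl_1^\infty (w^2-1)^{\lam-1/2}\,w^{-2\lam-1}\,C_m^\lam(w)\,\vp(tw)\,dw.
\]
Since $\vp$ has compact support in $(0,\infty)$, the effective $w$-range $[a/t,b/t]\cap(1,\infty)$ is compact for every $t>0$, so repeated differentiation under the integral sign is legitimate and yields $\stackrel{*}{\G}\!{}_{\!-}^{\!\lam,m}\vp\in C^\infty(0,\infty)$. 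Combined with the vanishing for $t\ge b$, all derivatives are then supported in $(0,b]$, which secures the rapid decay at infinity required by $S(\bbr_+)$.

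The essential step is the behavior at $t=0^+$. Here I would expand the Gegenbauer polynomial $C_m^\lam(r/t)=\sum_{j=0}^M c_{m,j}(r/t)^{m-2j}$ in (\ref{4gt6a1}) and write
\[
(\stackrel{*}{\G}\!{}_{\!-}^{\!\lam,m}\vp)(t)=\frac{1}{c_{\lam,m}}\sum_{j=0}^M c_{m,j}\,t^{1-m+2j}\,I_j(t),\qquad I_j(t)=\intl_t^\infty(r^2-t^2)^{\lam-1/2}r^{m-2j-2\lam-1}\vp(r)\,dr,
\]
so that the prefactor $t^{1-m+2j}$ is singular at the origin precisely for $j=0,1,\ldots,M-1$. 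For $t<a$ the binomial expansion $(r^2-t^2)^{\lam-1/2}=r^{2\lam-1}\sum_{k=0}^\infty\binom{\lam-1/2}{k}(-1)^k(t/r)^{2k}$ converges uniformly on $r\in[a,b]$, so integrating termwise represents $I_j(t)$ as a power series in $t^2$ whose coefficients are proportional to $\intl_0^\infty r^{m-2(j+k+1)}\vp(r)\,dr$. By the moment conditions (\ref{8vcmk11}), these coefficients vanish exactly for $1\le j+k+1\le M$, i.e.\ for $k=0,1,\ldots,M-j-1$, so $I_j(t)=O(t^{2(M-j)})$ as $t\to 0^+$. Each product $t^{1-m+2j}I_j(t)$ is therefore a genuine power series in $t$ starting at exponent $1-m+2M$, which equals $1$ if $m=2M$ and $0$ if $m=2M+1$. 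Summing over $j$, I conclude that $\stackrel{*}{\G}\!{}_{\!-}^{\!\lam,m}\vp$ is analytic in a neighborhood of $0$ and every derivative has a finite limit at $0^+$, completing the verification that $\stackrel{*}{\G}\!{}_{\!-}^{\!\lam,m}\vp\in S(\bbr_+)$.

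The main obstacle I anticipate is the bookkeeping of the cancellation at the origin: for each singular index $j\le M-1$, the $M-j$ lowest Taylor coefficients of $I_j(t)$ must vanish and compensate the pole of $t^{1-m+2j}$, and the termwise binomial expansion must converge uniformly in both $r\in[a,b]$ and $t$ in a small neighborhood of $0$ in order to justify identifying $\stackrel{*}{\G}\!{}_{\!-}^{\!\lam,m}\vp$ with the resulting power series and differentiating it term by term.
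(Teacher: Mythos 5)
Your proof is correct. The support claim and the $C^\infty$-smoothness on $(0,\infty)$ are handled essentially as in the paper (your substitution $r=tw$ is the paper's $s=t/r$ in disguise), and the heart of the matter --- the behaviour at $t=0^+$ --- rests on the same mechanism in both arguments: after expanding the kernel in powers of $t/r$, the moment conditions (\ref{8vcmk11}) annihilate exactly those coefficients that would multiply negative powers of $t$. Where you differ is in the technical packaging. The paper expands $(1-x)^{\lam-1/2}$ by Taylor's formula to a \emph{finite} order $n$ with an explicit integral remainder $\om_n$, repeats the analysis for each derivative $f^{(\ell)}$ separately, and must take $n$ large depending on $\ell$ to force the remainder terms $B_{j,\ell}$ to vanish at the origin; the moments that appear there are moments of $\vp^{(\ell)}$ rather than of $\vp$ itself. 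You instead keep the full binomial series, which converges uniformly for $r\in[a,b]$ and $t$ bounded away from $a$ precisely because $\supp\vp\subset[a,b]$ with $a>0$, and conclude that $\stackrel{*}{\G}\!{}_{\!-}^{\!\lam,m}\vp$ is given near the origin by a convergent power series whose lowest exponent is $1-m+2M\ge 0$; real-analyticity then delivers the finite limits of all derivatives at $0^+$ in a single stroke. Your route is somewhat cleaner and avoids the order-by-order remainder estimates, while the paper's finite-order expansion is more robust (it would survive with only an asymptotic, non-convergent expansion of the kernel), but that extra generality is not needed here since $a>0$.
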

\begin{proof} The second statement is an immediate consequence of the right-sided structure of $\stackrel{*}{\G}\!{}_{\!-}^{\!\lam,m} \vp$.  To prove the first statement, let $f= \stackrel{*}{\G}\!{}_{\!-}^{\!\lam,m} \vp$. Then
\[f(t)=\frac{1}{c_{\lam,m}}\,\intl_0^1 (1 - s^2)^{\lam -1/2}\, C^\lam_m  \left(\frac{1}{s}\right)\, \vp\left(\frac{t}{s}\right)  \,ds.\]
Since $\vp$ is compactly supported, this function is infinitely differentiable for $t>0$ and we can write
\[ f^{(\ell)}(t)=\frac{1}{2\,c_{\lam,m}}\,\intl_0^1 (1 - x)^{\lam -1/2}\, C^\lam_m  \left(\frac{1}{\sqrt {x}}\right)\, \vp^{(\ell)}\left(\frac{t}{\sqrt {x}}\right) \,\frac{dx}{(\sqrt {x})^{\ell +1}}.\]
By Taylor's formula, $(1 - x)^{\lam -1/2}=p_n(x)+c\, x^{n+1}\,\om_n (x)$, where   $p_n(x)$ is a polynomial of degree $n$, $c=\const$, and
\[
\om_n (x)=\intl_0^1 (1-yx)^{\lam-n-3/2} (1-y)^n dy.\]
 Hence (cf. (\ref{kIIu})),  for some $N>0$,
\[(1 - x)^{\lam -1/2}\, C^\lam_m  \left(\frac{1}{\sqrt {x}}\right)= \sum\limits _{j=0}^N c_j \,(\sqrt {x})^{2j-m}+c\,x^{n+1}\,\om_n (x)\,  C^\lam_m  \left(\frac{1}{\sqrt {x}}\right).\]
 Plugging this expression in $f^{(\ell)}$, we obtain $f^{(\ell)}=A+B$, where $A$ is a linear combination of integrals
 \[ A_{j,\ell} (t)= \intl_0^1 (\sqrt {x})^{2j-m-\ell -1} \vp^{(\ell)}\left(\frac{t}{\sqrt {x}}\right)\, dx=c_{j,\ell}\, t^{2j-m-\ell +1},\]
\[c_{j,\ell}= 2 \intl_t^\infty r^{m+\ell-2j-2}\vp^{(\ell)} (r)\, dr,\qquad j=0,1,\ldots , N; \quad \ell=0,1,\ldots ,\]
and
\bea
B&=&c\intl_0^1 \om_n (x)\, C^\lam_m  \left(\frac{1}{\sqrt {x}}\right)\, \vp^{(\ell)}\left(\frac{t}{\sqrt {x}}\right) \,x^{n+1-(\ell +1)/2}\, dx\nonumber\\
&=&2c\, t^{2n-\ell +3}\intl_t^\infty \om_n  \left( \frac{t^2}{r^2}\right)C^\lam_m  \left(\frac{r}{t}\right)\, \vp^{(\ell)} (r)\, r^{\ell -2n-4}\, dr.\nonumber\eea

If $\vp$ satisfies (\ref{8vcmk11}), then all  $A_{j,\ell}$ have   a finite limit as $t\to 0$. Furthermore, by (\ref{kIIu}), $B$ is a linear combination of the integrals
\[B_{j,\ell}= t^{2n-\ell +3+2j-m}\intl_t^\infty \om_n  \left( \frac{t^2}{r^2}\right)\, \vp^{(\ell)} (r)\, r^{\ell -2n-4+m-2j}\, dr.\]
Recall that $\supp \,\vp \!\subset \![a,b]$, $0\!<\!a\!<\!b\!<\!\infty$, and suppose that $0\!<\!t\!<\!a/2$. Then
\bea \om_n  \left( \frac{t^2}{r^2}\right)&=&\intl_0^1 \left(1-\frac{yt^2}{r^2}\right)^{\lam-n-3/2} (1-y)^n \,dy\nonumber\\
&\le&\intl_0^1 \frac{(1-y)^n \,dy}{(1-y/4)^{n+3/2-\lam}}=c<\infty.\nonumber\eea
It follows that $B_{j,\ell}=O(t^{2n-\ell +3+2j-m}) \to 0$  as $t\to 0$ if $n$ is big enough, which completes the proof.
\end{proof}

\begin{proposition} \label{89n6g66} Every function $\vp \!\in \!\D_m (\bbr_+)$ is represented as $\vp \!=\!\G^{\lam, m}_{-}\psi$, where
\be\label{ki9a} \psi= 2^{-2\lam -1} D_-^{2\lam +1} \stackrel{*}{\G}\!{}_{\!-}^{\!\lam,m} \vp \in S(\bbr_+).\ee If
 $\supp \,\vp \subset [a,b]$, $0<a<b<\infty$, then $\vp(t)=0$ for all $t>b$.
\end{proposition}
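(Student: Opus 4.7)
The plan is to apply $\stackrel{*}{\G}\!{}_{\!-}^{\!\lam,m}$ to the desired identity $\vp=\G^{\lam,m}_{-}\psi$ and invoke the composition formula in Proposition~\ref{89n6g} to turn the problem into one of inverting a Riemann--Liouville fractional integral on $S(\bbr_+)$, which is already in hand thanks to Proposition~\ref{blo8xw}.

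Concretely, I would set $f:=\stackrel{*}{\G}\!{}_{\!-}^{\!\lam,m}\vp$. By Proposition~\ref{89n6g55}, the moment conditions defining $\D_m(\bbr_+)$ guarantee $f\in S(\bbr_+)$ and $f$ vanishes on $(b,\infty)$. Since $I^{2\lam+1}_{-}$ is a bijection of $S(\bbr_+)$ onto itself (Proposition~\ref{blo8xw}), there is a unique $\psi\in S(\bbr_+)$ with $2^{2\lam+1}I^{2\lam+1}_{-}\psi=f$, namely $\psi=2^{-2\lam-1}D^{2\lam+1}_{-}f$, matching (\ref{ki9a}). I then verify $\G^{\lam,m}_{-}\psi=\vp$ as follows: because $\psi\in S(\bbr_+)$ automatically satisfies the growth condition (\ref{for10zgn}), Proposition~\ref{89n6g} yields
\[
\stackrel{*}{\G}\!{}_{\!-}^{\!\lam,m}\G^{\lam,m}_{-}\psi=2^{2\lam+1}I^{2\lam+1}_{-}\psi=f=\stackrel{*}{\G}\!{}_{\!-}^{\!\lam,m}\vp.
\]
To cancel $\stackrel{*}{\G}\!{}_{\!-}^{\!\lam,m}$, I appeal to the injectivity statement of Proposition~\ref{lo8xw92}: its hypothesis (\ref{foNNgd}) holds for $\vp$ trivially since $\vp\in C_c^\infty(\bbr_+)$, and it holds for $\G^{\lam,m}_{-}\psi$ by Proposition~\ref{lo8xw91}, which supplies $t^{\gam-1}\G^{\lam,m}_{-}\psi\in L^1(\bbr_+)$ for every $\gam>0$ (pick any $\gam>m-1$). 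Hence $\vp=\G^{\lam,m}_{-}\psi$.

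For the support statement (which I read as concerning $\psi$, since $\supp\vp\subset[a,b]$ is already part of the hypothesis), it suffices to observe that $D^{2\lam+1}_{-}$ may be written as $(-d/dt)^{N}I^{N-2\lam-1}_{-}$ for any integer $N>2\lam+1$, and both $I^{N-2\lam-1}_{-}$ and the subsequent differentiations are right-sided operations; hence the vanishing of $f$ on $(b,\infty)$ propagates to $\psi$. The only point requiring any care is checking that Propositions~\ref{89n6g} and \ref{lo8xw92} really do apply to $\psi$ and $\G^{\lam,m}_{-}\psi$ respectively, but both reductions follow immediately from Proposition~\ref{lo8xw91} together with the rapid decay built into $S(\bbr_+)$.
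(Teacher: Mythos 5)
Your proposal is correct and follows essentially the same route as the paper: form $f=\stackrel{*}{\G}\!{}_{\!-}^{\!\lam,m}\vp$, use Proposition~\ref{89n6g55} to place it in $S(\bbr_+)$ with the right support, invert $I^{2\lam+1}_{-}$ via Proposition~\ref{blo8xw} to define $\psi$, and then combine the composition formula of Proposition~\ref{89n6g} with the injectivity of $\stackrel{*}{\G}\!{}_{\!-}^{\!\lam,m}$ (Proposition~\ref{lo8xw92}, applicable by Proposition~\ref{lo8xw91}) to conclude $\vp=\G^{\lam,m}_{-}\psi$. The only cosmetic difference is that the paper cancels $\stackrel{*}{\G}\!{}_{\!-}^{\!\lam,m}$ by applying injectivity to the difference $F=\vp-\G^{\lam,m}_{-}\psi$ rather than to the two functions separately, and your reading of the final support assertion as concerning $\psi$ agrees with what the paper actually proves.
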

\begin{proof} By  Proposition \ref{89n6g55}, the function $f= \stackrel{*}{\G}\!{}_{\!-}^{\!\lam,m} \vp$ belongs to $S(\bbr_+)$ and equals zero for all $t>b$. Hence, by Proposition \ref{blo8xw}, the function $\psi= 2^{-2\lam -1} D_-^{2\lam +1}f$ also belongs to $S(\bbr_+)$ and equals zero for all $t>b$. To  show that $\vp =\G^{\lam, m}_{-}\psi$, let $F=\vp -\G^{\lam, m}_{-}\psi$. By Proposition \ref{lo8xw91},
$F\in C^\infty (\bbr_+)$  and
$t^{\gam -1} F \in L^1 (\bbr_+)$ for all $\gam>0$. Hence,  owing to Propositions \ref{89n6g} and \ref{lo8xw},
\[\stackrel{*}{\G}\!{}_{\!-}^{\!\lam,m} F=\stackrel{*}{\G}\!{}_{\!-}^{\!\lam,m} \vp -\stackrel{*}{\G}\!{}_{\!-}^{\!\lam,m}\G^{\lam, m}_{-}\psi=\stackrel{*}{\G}\!{}_{\!-}^{\!\lam,m} \vp- I_-^{2\lam +1} D_-^{2\lam +1}\stackrel{*}{\G}\!{}_{\!-}^{\!\lam,m} \vp=0.\]
Now, the required result follows from the injectivity of the operator $\stackrel{*}{\G}\!{}_{\!-}^{\!\lam,m}$;  see Proposition \ref{lo8xw92}.
\end{proof}

\subsection{The left-sided  integrals}

Let $\lam >-1/2$,  $m \in \bbz_+$, $0<a\le \infty$. The {\it  left-sided Gegenbauer and Chebyshev fractional integrals} on the interval $(0,a)$ are defined as follows.  For  $\lam \neq 0$, we set
\be \label {4gt6ale}
(\G^{\lam, m}_{+} f)(r)=\frac{r^{-2\lam}}{c_{\lam,m}}\,\intl_0^r (r^2 - t^2)^{\lam -1/2}\, C^\lam_m \left (\frac{t}{r} \right )\, f (t)  \, dt,\ee
\be \label {4gt6a1le}  (\stackrel{*}{\G}\!{}_{\!+}^{\!\lam,m} f)(r)
=\frac{1}{c_{\lam,m}}\,\intl_0^r (r^2 - t^2)^{\lam -1/2}\, C^\lam_m \left (\frac{r}{t} \right )\, f (t) \, t\,dt,\ee
$c_{\lam,m}$ being defined by (\ref{gynko}), $0<r<a$. In the case $\lam = 0$ we denote
\be \label {4gt6atle}
(\T_{+}^m f)(r)=\frac{2}{\sqrt{\pi}}\,\intl_0^r (r^2 - t^2)^{-1/2}\, T_m \left (\frac{t}{r} \right )\, f (t)  \,  dt,\ee
\be \label {4gt6a1tle}  (\stackrel{*}{\T}\!{}_{\!+}^m f)(r)
=\frac{2}{\sqrt{\pi}}\,\intl_0^r (r^2 - t^2)^{-1/2}\, T_m \left (\frac{r}{t} \right )\, f (t) \, t\,  dt.\ee
The left-sided  integrals are expressed through the right-sided ones by the formulas
\bea &&\label {4gt6a1ldr}
(\G^{\lam, m}_{+} f)(r)=\frac{1}{r}\, (\G^{\lam, m}_{-} f_1)\left(\frac{1}{r}\right), \qquad f_1(t)=\frac{1}{t^{2\lam +2}} \, f\left(\frac{1}{t}\right);\qquad \\
&& \label {4gt6a1ldz}
(\stackrel{*}{\G}\!{}_{\!+}^{\!\lam,m} f)(r)=r^{2\lam}(\stackrel{*}{\G}\!{}_{\!-}^{\!\lam,m} f_2)\left(\frac{1}{r}\right), \qquad f_2(t)=\frac{1}{t} \, f\left(\frac{1}{t}\right).\eea

These formulas combined with Proposition \ref{lo8xw} give the following statement.

 \begin{proposition} \label{lo8xwADD}  Let  $a>0$,  $\lam >-1/2$.  The integrals (\ref{4gt6ale})-(\ref{4gt6a1tle})
 are absolutely convergent  for almost all $r<a$ under the following conditions.

{\rm (i)} For (\ref{4gt6ale}), (\ref{4gt6atle}):
 \be\label {026g094t}   \intl_0^a t^\eta |f(t)|\, dt <\infty,\qquad \eta=\left \{\begin{array} {ll} 0 &\mbox{if $m$ is even,}\\
1 &\mbox{if $m$ is odd.}\end{array}\right.\ee

{\rm (ii)} For (\ref{4gt6a1le}), (\ref{4gt6a1tle}):
 \be\label {026gt}
  \intl_0^a  t^{1-m} |f(t)|\, dt <\infty.\ee
 \end{proposition}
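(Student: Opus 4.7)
The strategy is to reduce everything to the right-sided case already handled in Proposition~\ref{lo8xw} by means of the inversion identities (\ref{4gt6a1ldr}) and (\ref{4gt6a1ldz}). The change of variables $t \mapsto 1/t$ is a diffeomorphism of $(0,a)$ onto $(1/a,\infty)$ sending null sets to null sets, so the phrase ``for almost all $r<a$'' is interchangeable with ``for almost all $s>1/a$'' via $s=1/r$. I will then simply verify that the weights match up.

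For part (i), the identity (\ref{4gt6a1ldr}) shows that $(\G^{\lam,m}_{+} f)(r)$ converges absolutely for a.a.\ $r<a$ if and only if $(\G^{\lam,m}_{-} f_1)(s)$ converges absolutely for a.a.\ $s>1/a$, where $f_1(t)=t^{-2\lam-2}f(1/t)$. Applying Proposition~\ref{lo8xw}(i) with threshold $1/a$, the sufficient condition is $\int_{1/a}^{\infty}|f_1(t)|\,t^{2\lam-\eta}\,dt<\infty$, and the substitution $u=1/t$ turns this into
\[
\int_{1/a}^{\infty}|f(1/t)|\,t^{-\eta-2}\,dt \;=\; \int_0^a |f(u)|\,u^{\eta}\,du,
\]
which is precisely (\ref{026g094t}). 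Part (ii) is handled in exactly the same way: by (\ref{4gt6a1ldz}) and Proposition~\ref{lo8xw}(ii), absolute convergence of $(\stackrel{*}{\G}\!{}_{\!+}^{\!\lam,m} f)(r)$ for a.a.\ $r<a$ follows from $\int_{1/a}^{\infty}|f_2(t)|\,t^{m-2}\,dt<\infty$ with $f_2(t)=t^{-1}f(1/t)$, and the same inversion $u=1/t$ rewrites this as $\int_0^a |f(u)|\,u^{1-m}\,du$, i.e., condition (\ref{026gt}).

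The Chebyshev cases (\ref{4gt6atle}) and (\ref{4gt6a1tle}) are obtained by formally setting $\lam=0$, following the convention stated after (\ref{4gt6a1t}); the same change of variables applies verbatim. There is no serious obstacle in the argument, as it amounts to bookkeeping under the inversion $t\mapsto 1/t$; the only minor point to check is that $1/a$ is a strictly positive threshold, so that Proposition~\ref{lo8xw} is applicable as stated.
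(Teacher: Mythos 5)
Your proposal is correct and follows exactly the route the paper intends: the paper's entire justification is the sentence "These formulas combined with Proposition \ref{lo8xw} give the following statement," i.e., the reduction via (\ref{4gt6a1ldr}) and (\ref{4gt6a1ldz}) to the right-sided case. Your weight computations under $u=1/t$ (yielding $\int_0^a|f(u)|u^{\eta}\,du$ and $\int_0^a|f(u)|u^{1-m}\,du$) are accurate and simply make explicit the bookkeeping the paper leaves to the reader.
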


 \begin {lemma} \label {marti} \cite[Proposition 3.7]{Ru14}
If $m=0,1$, then $\G^{\lam, m}_{+}$ is injective on $\bbr_+$  in the class of functions satisfying (\ref{026g094t}) for all $a>0$.
 If $m\ge 2$, then $\G^{\lam, m}_{+}$ is non-injective in this class of functions.
Specifically, let  $f_k (t)=t^k$,  where $k$ is a nonnegative  integer such that $ m-k=2,4,  \ldots$.
 Then $(\G^{\lam, m}_{+} f_k)(t)=0$ for all $t>0$.
\end {lemma}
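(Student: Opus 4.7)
The plan is to exploit a common change of variables, namely $t=ru$ (equivalently $s=r^2$, $u=t^2$), which strips the $r$-dependence out of the kernel of $\G^{\lam,m}_{+}$. Under this rescaling, both the injectivity and non-injectivity claims reduce to one-dimensional facts: non-injectivity will follow from the orthogonality of the Gegenbauer polynomials on $(-1,1)$ against the weight $(1-u^2)^{\lam-1/2}$, while injectivity (for $m=0,1$) will follow from that of a classical Riemann-Liouville fractional integral of order $\lam+1/2$.

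For the non-injectivity statement, I would substitute $t=ru$ directly in (\ref{4gt6ale}). A direct calculation yields
\be\label{plan1}
(\G^{\lam, m}_{+} f_k)(r)=\frac{r^k}{c_{\lam,m}} \intl_0^1 (1-u^2)^{\lam -1/2}\, C^\lam_m (u)\, u^k \, du.
\ee
Under the hypothesis $m-k\in\{2,4,\ldots\}$, the integers $m$ and $k$ have the same parity, so $C^\lam_m(u)\, u^k$ is an even polynomial in $u$ and the integral in (\ref{plan1}) equals one half of the corresponding integral over $(-1,1)$. Since $k<m$, the monomial $u^k$ lies in the linear span of $C^\lam_0,\ldots,C^\lam_k$, each of which is orthogonal to $C^\lam_m$ on $(-1,1)$ with respect to the Gegenbauer weight. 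Hence the right-hand side of (\ref{plan1}) vanishes for every $r>0$.

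For the injectivity part with $m\in\{0,1\}$, I would use that $C^\lam_0(t)=1$ and $C^\lam_1(t)=2\lam t$, so (\ref{4gt6ale}) reduces to
\[
(\G^{\lam,m}_{+} f)(r)= c_m\, r^{-2\lam -m} \intl_0^r (r^2-t^2)^{\lam -1/2}\, t^m f(t)\, dt
\]
for an explicit positive constant $c_m$. Setting $s=r^2$, $u=t^2$, and introducing $g(u)=u^{(m-1)/2}f(\sqrt u)$, a short computation identifies this expression (up to an explicit power of $s$) with the left-sided Riemann-Liouville integral of order $\lam+1/2$ of $g$ on $(0,s)$. The assumption (\ref{026g094t}) translates precisely into local integrability of $g$ near the origin, and injectivity of the Abel operator $I^{\lam+1/2}_{+}$ on such a class is classical (see, e.g., \cite{SKM}); reversing the substitution transfers this back to injectivity of $\G^{\lam,m}_{+}$.

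The non-injectivity argument is essentially algebraic and poses no real difficulty. The one step that requires careful bookkeeping is the injectivity half, where one must verify that the integrability condition (\ref{026g094t}) is precisely strong enough to place the transformed function $g$ in a class on which the injectivity of the Riemann-Liouville operator is available; this is where the parity-dependent exponent $\eta$ in (\ref{026g094t}) enters naturally and accounts for the two cases $m=0$ and $m=1$ simultaneously.
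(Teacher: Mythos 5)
Your proposal is correct and is essentially the standard argument: the non-injectivity half follows from the substitution $t=ru$ and the orthogonality of $C^\lam_m$ (resp. $T_m$) against the weight $(1-u^2)^{\lam-1/2}$ on $(-1,1)$, using that $C^\lam_m(u)u^k$ is even and $\deg u^k<m$, while the injectivity half for $m=0,1$ reduces via $s=r^2$, $u=t^2$ to the injectivity of the Abel--Riemann--Liouville operator $I_+^{\lam+1/2}$ on $L^1_{loc}$, with the parity-dependent exponent $\eta$ in (\ref{026g094t}) exactly matching the local integrability needed for $g(u)=u^{(m-1)/2}f(\sqrt{u})$. The paper itself states this lemma without proof, importing it from \cite[Proposition 3.7]{Ru14}, and your route coincides with the one used there; the only point worth making explicit is that for $\lam=0$ one argues with $T_m$ and the weight $(1-u^2)^{-1/2}$ (so the degeneration of $C_1^\lam=2\lam t$ at $\lam=0$ causes no trouble).
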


An important question is: Are there any other functions in the kernel of the operator  $\G^{\lam, m}_{+}$ rather than $f_k$ in Lemma \ref{marti}?  Below we give a negative answer to this question under certain conditions, which are very close to (\ref{026g094t}).
Let
\be\label {PLUH} \varkappa_{\lam, m} (t)=\left \{\begin{array} {ll} 1 \; &\text{\rm if $m$ is even,}\\
t^{1+2\lam}\; &\text{\rm if $m$ is odd,  $-1/2<\lam<0$,}\\
t\,(1+ | \log t|) \; &\text{\rm if $m$ is odd,  $\lam=0$,}\\
t \; &\text{\rm if $m$ is odd,  $\lam>0$}. \end{array}\right.\ee
Given $0<a\le \infty$, we denote by $L^1_\varkappa (0,a)$ the set of all function $f$ on  $(0,a)$ such that
\be\label {PLUH1} \intl_0^{a_1} |f(t)|\, \varkappa_{\lam, m} (t)\, dt \le \infty \quad \forall\, a_1<a.\ee
Clearly,  $L^1_{loc} (0,a)\subset L^1_\varkappa (0,a)$.

 \begin {lemma} \label {marti2} Suppose that $m\ge 2$, $M=[m/2]$, $0<a\le\infty$. If $f\in L^1_\varkappa (0,a)$  and $(\G^{\lam, m}_{+} f)(r)=0$ for almost all $0<r<a$, then
\be\label{ch2uha19X} f(t)=\sum\limits_{j=1}^{M} c_{j}\, t^{m-2j} \quad \mbox{a.e. on $(0,a)$} \ee
with some coefficients $c_j$.
\end{lemma}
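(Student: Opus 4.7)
My plan is to prove the lemma by a duality argument. Specifically, I will show that $\int_0^a f(t)\,\vp(t)\,dt=0$ for every $\vp\in\D_m(\bbr_+)$ with $\supp\vp\subset(0,a)$; the conclusion (\ref{ch2uha19X}) will then follow from a standard finite-codimension computation.

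Given $\vp\in\D_m(\bbr_+)$ with $\supp\vp\subset [a_0,b]\subset (0,a)$, Proposition \ref{89n6g66} lets me write $\vp=\G^{\lam,m}_-\xi$ with $\xi\in S(\bbr_+)$ and $\xi(t)=0$ for $t>b$.  A Fubini computation, based on the adjoint-like identity
\[
\int_0^a f(t)\,(\G^{\lam,m}_-\xi)(t)\,dt \;=\; \int_0^b \xi(r)\,r^{2\lam+1}\,(\G^{\lam,m}_+ f)(r)\,dr,
\]
(verified by swapping the order of integration and recognizing the inner integral in $t$ as $r^{2\lam+1}(\G^{\lam,m}_+ f)(r)$) then yields $\int_0^a f\vp\,dt=0$, since the right-hand side vanishes by hypothesis.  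To finish, observe that $\D_m(\bbr_+)\cap C_c^\infty(0,a)$ is the common kernel of the $M$ linearly independent functionals $L_k(\vp)=\int_0^a r^{m-2k}\vp(r)\,dr$, $k=1,\ldots,M$, on $C_c^\infty(0,a)$, and so has codimension $M$.  Hence every distribution on $(0,a)$ annihilating this subspace is a linear combination of $r^{m-2k}$; since $f\in L^1_\varkappa(0,a)\subset L^1_{\mathrm{loc}}(0,a)$, this delivers the pointwise representation (\ref{ch2uha19X}).

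The principal technical obstacle is justifying the Fubini step, and this is where the piecewise definition of $\varkappa_{\lam,m}$ in (\ref{PLUH}) becomes essential.  Using the bound (\ref{kioxsru}) and swapping the order of integration, the matter reduces to estimating
\[
J(t)\;=\;\int_t^b (r^2-t^2)^{\lam-1/2}\,|C^\lam_m(t/r)|\,|\xi(r)|\,r\,dr
\]
as $t\to 0^+$ and then checking that $\int_0^b |f(t)|\,J(t)\,dt<\infty$.  For $m$ even one finds $J(t)=O(1)$, consistent with $\varkappa_{\lam,m}\equiv 1$; for $m$ odd, the extra factor $t/r$ coming from $|C^\lam_m|$ interacts with the singularity at $r=t$ to produce $J(t)=O(t)$ when $\lam>0$, $J(t)=O(t\,|\log t|)$ when $\lam=0$, and $J(t)=O(t^{1+2\lam})$ when $-1/2<\lam<0$.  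These asymptotics match $\varkappa_{\lam,m}$ case by case, so the integrability hypothesis $f\in L^1_\varkappa(0,a)$ is exactly what is needed for the double integral to converge absolutely.  Once Fubini is secured, the remainder of the argument is essentially formal.
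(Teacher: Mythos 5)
Your proposal is correct and follows essentially the same route as the paper: both reduce the claim to showing $\int_0^a f\vp=0$ for all $\vp\in\D_m(\bbr_+)$ supported in $(0,a)$ via Proposition \ref{89n6g66} and the adjoint identity, justify Fubini with exactly the case analysis encoded in $\varkappa_{\lam,m}$, and then conclude by the annihilator argument for the $M$ moment functionals (which the paper carries out concretely with a biorthogonal system $\{\vp_{m-2j}\}$ rather than by citing the finite-codimension lemma abstractly). Your asymptotics for $J(t)$ match the paper's estimates for $g(t)$ case by case, so the argument is complete.
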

\begin{proof}
We introduce auxiliary functions \[\vp_{m-2k}\in C_c^\infty (0,a), \qquad k\in \{1,2,\ldots, M\}, \] satisfying
\be\label{ch2uha191}
\intl_0^a t^{m-2j} \vp_{m-2k} (t)\, dt=\del_{j,k}=\left \{\begin{array} {ll} 1\; &\text{\rm if} \; j=k, \\
0 \; &\text{\rm otherwise.}\\
\end{array}\right.\ee
The existence of such functions is a consequence of the general fact from functional analysis for bi-orthogonal systems; see, e.g., \cite [p. 160]{Kak}. We set
\be\label{lko2}
 c_{j}=\intl_0^a f(t)\, \vp_{m-2j} (t)\, dt.\ee
Let $\om \in C_c^\infty (0,a)$ be an arbitrary test function, and let
\be\label{lko} \vp (t)=\om (t)-\sum\limits_{j=1}^{M} \vp_{m-2j} (t)\intl_0^a s^{m-2j} \om (s)\, ds \quad (\in C_c^\infty (0,a) ).\ee
Then for any $k\in \{1,2,\ldots, M\}$,
\bea
\intl_0^a t^{m-2k} \vp (t)\, dt&=&\intl_0^a t^{m-2k} \om (t)\, dt\nonumber\\
&-&\sum\limits_{j=1}^{M} \Bigg [\intl_0^a t^{m-2k}  \vp_{m-2j} (t)\, dt\Bigg ] \Bigg [\intl_0^a s^{m-2j} \om (s)\, ds\Bigg ].\nonumber\eea
By (\ref{ch2uha191}), this gives
\[\intl_0^a t^{m-2k}  \vp (t)\, dt=0\quad \forall\, k\in \{1,2,\ldots, M\}.\]
Suppose that $\vp (t)\equiv 0$ on some interval $(a_1, a)$ with $a_1<a$, and define  $\tilde \vp \in C_c^\infty(\bbr_+)$ so that
$\tilde \vp (t)=\vp (t)$ if $t\le a_1$ and   $\tilde \vp (t)=0$ if $t>a_1$.
By Proposition \ref {89n6g66},  $\tilde \vp$ is represented as $\tilde \vp \!=\!\G^{\lam, m}_{-}\psi$, where $\psi$ belongs to $ S(\bbr_+)$ and equals zero on $(a_1,\infty)$. Then
\bea \intl_0^\infty f(t) \tilde \vp (t)\, dt&=&\intl_0^\infty f(t)\, (\G^{\lam, m}_{-}\psi)(t)\, dt\nonumber\\
\label{lko1}&=& \intl_0^{a_1} \psi (r) (\G^{\lam, m}_{+} f)(r)\, r^{2\lam +1}\, dr=0.\eea
Hence, by (\ref{lko}), (\ref{lko1}), and  (\ref{lko2}),
\bea
&&\intl_0^a f(t) \,\om (t)\, dt=\intl_0^\infty f(t) \left [\vp (t)+\sum\limits_{j=1}^{M} \vp_{m-2j} (t) \intl_0^a s^{m-2j} \om (s)\, ds \right ]\, dt\nonumber\\
&&=\sum\limits_{j=1}^{M} c_{j} \intl_0^a  s^{m-2j} \om (s)\, ds=\intl_0^a  \left [\sum\limits_{j=1}^{M}  c_{j} s^{m-2j}\right ]\,  \om (s)\, ds.\nonumber\eea
This gives (\ref{ch2uha19X}).

To complete the proof, we must justify application of Fubini's theorem in (\ref{lko1}). Replacing $(\G^{\lam, m}_{+} f)(r)$ according to (\ref{4gt6ale}), and taking absolute values, we have
\bea
I&\equiv&\intl_0^{a_1} |\psi (r)| \,r\, dr\intl_0^r    (r^2 - t^2)^{\lam -1/2}\, \left| C^\lam_m \left (\frac{t}{r} \right )\right |\, |f (t)|  \, dt\nonumber\\
&\le& c\, \intl_0^{a_1}  \,r\, dr\intl_0^r    (r^2 - t^2)^{\lam -1/2}\,  \left (\frac{t}{r} \right )^\eta\, |f (t)|  \, dt,
\nonumber\eea
where $\eta=0$ if $m$ is even and $\eta=1$ if $m$ is odd.
If $\eta=0$, then
\[
I\le c\,\intl_0^{a_1} |f (t)|  \, dt \intl_t^{a_1}   (r^2 - t^2)^{\lam -1/2}\,\,r\, dr\le c_1\, \intl_0^{a_1} |f (t)|  \, dt<\infty.\]
If $\eta=1$, then
\[
I\le c\,\intl_0^{a_1} |f (t)| \,t \,g(t)\,  dt,\]
where
\[g(t)=\intl_t^{a_1}   (r^2 - t^2)^{\lam -1/2}\, dr=t^{2\lam}\intl_1^{a_1/t}   (s^2 - 1)^{\lam -1/2}\, ds.\]
The behavior of $g(t)$ as $t\to 0$ can be easily examined by considering the cases indicated in (\ref{PLUH}). This  gives
\[
I\le c\,\intl_0^{a_1} |f(t)|\, \varkappa_{\lam, m} (t)\, dt < \infty.\]
\end{proof}

Lemma \ref{marti2} combined with  (\ref{4gt6a1ldr}) gives the following result for the right-sided Gegenbauer-Chebyshev integrals.

Let
\be\label {rightG} \tilde\varkappa_{\lam, m} (t)=\left \{\begin{array} {ll} t^{2\lam} \; &\text{\rm if $m$ is even,}\\
t^{-1}\; &\text{\rm if $m$ is odd,  $-1/2<\lam<0$,}\\
t^{2\lam-1}\,(1+ | \log t|) \; &\text{\rm if $m$ is odd,  $\lam=0$,}\\
t^{2\lam-1} \; &\text{\rm if $m$ is odd,  $\lam>0$}. \end{array}\right.\ee

Given $a\ge \infty$, we denote by $L^1_{\tilde\varkappa} (a, \infty)$ the set of all function $f$ on  $(a, \infty)$ such that
\be\label {rightG1} \intl^\infty_{a_1} |f(t)|\, \tilde\varkappa_{\lam, m} (t)\, dt \le \infty \quad \forall\, a_1>a.\ee

\begin {lemma} \label {marti3} Suppose that $m\ge 2$, $M=[m/2]$, $a\ge 0$. If $f\in L^1_{\tilde\varkappa} (a, \infty)$  and $(\G^{\lam, m}_{-} f)(r)=0$ for almost all $r>a$, then
\be\label{ch2uha19} f(t)=\sum\limits_{k=0}^{M-1} c_{k} t^{2k-m-2\lam} \quad \mbox{a.e. on $(a,\infty)$} \ee
with some coefficients $c_k$.
\end{lemma}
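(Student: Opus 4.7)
The plan is to reduce directly to Lemma \ref{marti2} by means of the inversion formula (\ref{4gt6a1ldr}), which converts right-sided Gegenbauer--Chebyshev integrals on $(a,\infty)$ into left-sided ones on $(0, 1/a)$. Given $f$ on $(a,\infty)$, I would introduce
\[
\phi(s) = s^{-2\lam-2}\, f(1/s), \qquad s\in (0, 1/a),
\]
with the convention $1/a=\infty$ when $a=0$. This choice is tailored so that the auxiliary function $\phi_1(t)=t^{-2\lam-2}\phi(1/t)$ appearing on the right-hand side of (\ref{4gt6a1ldr}) equals $f(t)$. Substituting in (\ref{4gt6a1ldr}) then yields
\[
(\G^{\lam, m}_{+}\phi)(r) = r^{-1}\, (\G^{\lam, m}_{-} f)(1/r), \qquad r\in (0, 1/a),
\]
so the vanishing hypothesis on $\G^{\lam, m}_{-} f$ over $(a,\infty)$ transfers at once to the vanishing of $\G^{\lam, m}_{+}\phi$ over $(0, 1/a)$.

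The step requiring the most attention will be verifying that $\phi$ lies in $L^1_\varkappa(0, 1/a)$, so that the hypotheses of Lemma \ref{marti2} are met. After the change of variable $s=1/t$,
\[
\intl_0^{A_1} |\phi(s)|\, \varkappa_{\lam, m}(s)\, ds = \intl_{1/A_1}^\infty |f(t)|\, t^{2\lam}\, \varkappa_{\lam, m}(1/t)\, dt,
\]
so the membership reduces to the pointwise identity $t^{2\lam}\, \varkappa_{\lam, m}(1/t)= \tilde\varkappa_{\lam, m}(t)$. I plan to check this case by case following the four branches in (\ref{PLUH}) and (\ref{rightG}); the logarithmic branch ($m$ odd, $\lam=0$) is the only mildly delicate one, but it works because $|\log(1/t)|=|\log t|$. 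The other three cases amount to elementary power bookkeeping and should match immediately.

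Once $\phi\in L^1_\varkappa(0,1/a)$ is in place, Lemma \ref{marti2} yields $\phi(s)=\suml_{j=1}^M c_j\, s^{m-2j}$ a.e.\ on $(0,1/a)$. Inverting the substitution via $f(t)=t^{-2\lam-2}\phi(1/t)$ will produce
\[
f(t)=t^{-2\lam-2}\suml_{j=1}^M c_j\, t^{2j-m}=\suml_{j=1}^M c_j\, t^{2j-m-2\lam-2},
\]
and reindexing $k=j-1$ recovers the asserted representation (\ref{ch2uha19}).
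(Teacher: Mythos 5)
Your proposal is correct and coincides with the paper's own (very terse) argument: the paper derives Lemma \ref{marti3} precisely by combining Lemma \ref{marti2} with the substitution formula (\ref{4gt6a1ldr}), and your choice $\phi(s)=s^{-2\lam-2}f(1/s)$, the weight identity $t^{2\lam}\varkappa_{\lam,m}(1/t)=\tilde\varkappa_{\lam,m}(t)$, and the final reindexing all check out. You have in fact supplied more detail than the paper does.
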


Lemma \ref{marti3} is an important complement of the following statement, which was proved in \cite[Lemma 3.3]{Ru14}.

\begin {lemma} \label{89srg}  Let $\lam >-1/2$. If $m=0,1$, then $\G^{\lam, m}_{-}$ is injective on $\bbr_+$ in the class of functions satisfying (\ref{for10zg}) for all $a>0$.
 If $m\ge 2$, then $\G^{\lam, m}_{-}$ is non-injective in this class of functions.
Specifically, let  $f_k (t)=t^{-2\lam-k-2}$,  where $k$ is a nonnegative  integer such that $ m-k=2,4,  \ldots$.
 Then $(\G^{\lam, m}_{-} f_k)(t)=0$ for all $t>0$.
\end {lemma}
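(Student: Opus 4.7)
The plan is to split the statement into its injectivity and non-injectivity halves. For $m\in\{0,1\}$ I would reduce $\G^{\lam,m}_-$ to a Riemann--Liouville fractional integral by a quadratic change of variables and then invoke the standard injectivity of $I_-^{\a}$. For $m\ge 2$ I would verify by a direct calculation that each $f_k(r)=r^{-2\lam-k-2}$ with $m-k\in\{2,4,\ldots\}$ lies in the kernel, using orthogonality of Gegenbauer polynomials on $[-1,1]$.

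For the injectivity part, I would start from $C_0^\lam\equiv 1$ and $c_{\lam,0}=\Gam(\lam+1/2)/2$, so the substitution $u=r^2$ in (\ref{4gt6a}) gives
\[(\G^{\lam,0}_- f)(t)=\frac{1}{\Gam(\lam+1/2)}\intl_{t^2}^\infty (u-t^2)^{\lam-1/2}\,F(u)\,du=(I_-^{\lam+1/2}F)(t^2),\]
with $F(u)=f(\sqrt{u})$. For $m=1$, since $C_1^\lam(x)=2\lam x$ and $c_{\lam,1}=\lam\,\Gam(\lam+1/2)$, the same substitution gives
\[(\G^{\lam,1}_- f)(t)=t\,(I_-^{\lam+1/2}G)(t^2),\qquad G(u)=f(\sqrt{u})/\sqrt{u}.\]
The hypothesis (\ref{for10zg}) translates precisely into the local integrability needed for $I_-^{\lam+1/2}F$ and $I_-^{\lam+1/2}G$ to converge absolutely on $(a^2,\infty)$ for every $a>0$. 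The classical injectivity of $I_-^{\lam+1/2}$ on this class (see \cite{Ru96, SKM}) forces $F\equiv 0$ or $G\equiv 0$, hence $f\equiv 0$. The Chebyshev case $\lam=0$ is identical, with $T_0\equiv 1$ and $T_1(x)=x$ reducing to $I_-^{1/2}$.

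For the non-injectivity part, I would substitute $s=t/r$ in (\ref{4gt6a}) with $f=f_k$; after collecting powers of $t$ this yields
\[(\G^{\lam,m}_- f_k)(t)=\frac{t^{-k-1}}{c_{\lam,m}}\intl_0^1 (1-s^2)^{\lam-1/2}\,s^k\,C_m^\lam(s)\,ds.\]
Since $C_m^\lam$ has parity $(-1)^m$ and $m-k$ is even, the integrand is an even function of $s$, so the integral over $[0,1]$ equals half the integral over $[-1,1]$. But $s^k$ is a polynomial of degree $k<m$, hence expressible as a linear combination of $C_0^\lam,\ldots,C_k^\lam$, each of degree strictly less than $m$. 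Orthogonality of the Gegenbauer polynomials with respect to the weight $(1-s^2)^{\lam-1/2}$ on $[-1,1]$ then annihilates the integral. The Chebyshev subcase proceeds identically, with $T_m$ in place of $C_m^\lam$ and the orthogonality taken against $(1-s^2)^{-1/2}$.

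The main obstacle is justifying injectivity of $I_-^{\lam+1/2}$ in the weak pointwise class (\ref{for10zg}) rather than on the Schwartz-type space of Proposition~\ref{blo8xw}. I would handle this either by differentiating the identity $(I_-^{\lam+1/2}F)(v)\equiv 0$ in $v>0$ enough times to recover $F$ via the formal relation $D_-^{\lam+1/2}I_-^{\lam+1/2}F=F$ a.e.\ (a standard fact in fractional calculus), or by testing against a dense family of compactly supported test functions $\psi\in\D_m(\bbr_+)$ and transferring the vanishing through the dual pairing already developed in Propositions~\ref{89n6g}--\ref{89n6g66}. Apart from this bookkeeping, the rest of the argument is essentially algebraic.
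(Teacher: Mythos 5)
You should first note that the paper contains no internal proof of Lemma \ref{89srg}: it is imported verbatim from \cite[Lemma 3.3]{Ru14} (just as Lemma \ref{marti} is imported from \cite[Proposition 3.7]{Ru14}), so there is no in-paper argument to match against. Judged on its own terms, your proposal follows the natural route and most of it checks out completely. The non-injectivity half is correct and self-contained: with $f_k(r)=r^{-2\lam-k-2}$, the substitution $s=t/r$ in (\ref{4gt6a}) does give $(\G^{\lam,m}_-f_k)(t)=c_{\lam,m}^{-1}\,t^{-k-1}\intl_0^1(1-s^2)^{\lam-1/2}\,s^k\,C^\lam_m(s)\,ds$; the integrand is even because $m-k$ is even, $k\le m-2<m$, and Gegenbauer (resp.\ Chebyshev) orthogonality kills the integral, the weight being integrable precisely because $\lam>-1/2$. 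The reduction of the $m=0,1$ cases to $I^{\lam+1/2}_-$ via $u=r^2$ is also correct, constants included, and (\ref{for10zg}) does translate exactly into $\intl_{a^2}^\infty|F(u)|\,u^{\lam-1/2}\,du<\infty$ (similarly for $G$), so the whole lemma reduces to injectivity of $I^{\a}_-$, $\a=\lam+1/2$, on this weighted class.

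That last step is the one genuine soft spot, and both of your proposed patches need adjustment as stated. Patch (1): the usual derivation of $D^\a_-I^\a_-F=F$ runs through the semigroup identity $I^{p-\a}_-I^\a_-F=I^p_-F$ for an integer $p>\a$, and that composition can diverge in your class; for instance $F(u)=u^{-\a-\e}$ with small $\e>0$ satisfies $\intl^\infty|F(u)|\,u^{\a-1}\,du<\infty$ while $I^p_-F\equiv\infty$. The correct ``standard fact'' to invoke is the inversion of $I^\a_-$ by the Marchaud (truncated finite-difference) derivative, which needs exactly the weighted $L^1$ decay at infinity that (\ref{for10zg}) supplies; this is in \cite{Ru96, SKM} and should be cited in that form (the right-sided structure lets you assume $\supp f\subset(a,\infty)$, so behavior near $0$ is irrelevant). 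Patch (2) is misdirected: Propositions \ref{89n6g55}--\ref{89n6g66} furnish representations $\vp=\G^{\lam,m}_-\psi$, and the pairing (\ref{lko1}) couples such $\vp$ with the \emph{left-sided} operator $\G^{\lam,m}_+$ acting on $f$; that machinery is the engine of Lemma \ref{marti2} (kernel of $\G^{\lam,m}_+$) and gives no direct handle on the injectivity of $\G^{\lam,m}_-$. A duality-type alternative would require either a left-sided analogue of Proposition \ref{89n6g66}, or a Mellin argument in the spirit of Proposition \ref{lo8xw92}: restrict $f$ to $(a,\infty)$, compute the Mellin multiplier of $\G^{\lam,m}_-$ from \cite{PBM2}, and verify it has no zeros on the relevant vertical line when $m=0,1$. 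With the Marchaud repair in place, your proof is complete.
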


According to Lemma \ref{marti3}, the functions $f_k$  exhaust the kernel of the operator  $\G^{\lam, m}_{-}$ in the  space $L^1_{\tilde\varkappa} (a, \infty)$, $a\ge 0$.

\section{Radon Transforms on $\rn$}
 We recall some known facts; see, e.g.,  \cite{GGG2,  H11,  Ru13b, Ru15}.
Let $\Pi_n$ be the set of all unoriented hyperplanes in
$\bbr^n$.
The Radon transform of a function $f$ on $\rn$
  is defined by the formula
\be\label{rtra} (Rf)(\xi)= \intl_{\xi} f(x) \,d_\xi x,\qquad \xi \in
\Pi_n,\ee
provided that this integral exists. Here $d_\xi x$ denotes  the Euclidean volume element  in $\xi$.
Every hyperplane $\xi\in \Pi_n$ has the form $\xi=\{x:\, x\cdot \th =t\}$, where $\theta \in \sn$, $t\in \bbr$.
Thus, we can write (\ref{rtra}) as
  \be\label{rtra1}  (Rf) (\theta, t)=\intl_{\theta^{\perp}} f(t\theta +
u) \,d_\theta u,\ee where
$\theta^\perp=\{x: \, x \cdot \theta=0\}$ is the hyperplane
orthogonal to $\theta$ and passing through the origin,
$d_\theta u$ is the Euclidean volume element  in $\theta^{\perp}$. We set $Z_n=S^{n-1}\times \bbr$ and equip $Z_n$ with the product measure
$d_*\theta dt$, where $d_*\theta=\sig_{n-1}^{-1} d\theta$ is the normalized surface measure on $\sn$.

Clearly, $ (Rf) (\theta,t) = (Rf)(-\theta, -t)$ for every $(\theta,t) \in Z_n$.

\begin{theorem}\label{byvs1} {\rm (cf. \cite[Theorem 3.2]{Ru13b})} \   If
\be \label {lkmux}
\intl_{|x|>a}  \,\frac{|f(x)|} {|x|} \, dx<\infty \quad \forall \, a>0, \ee
 then  $(Rf)(\xi)$ is finite for  almost all $\xi \in \Pi_n$.
If $f$ is nonnegative, radial, and (\ref{lkmux}) fails, then  $(Rf)(\xi)\equiv \infty$.
\end{theorem}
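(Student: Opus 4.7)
The plan has two parts: the sufficiency of (\ref{lkmux}) for a.e.~finiteness of $Rf$, and the necessity in the radial nonnegative case.

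For sufficiency, I will show that for every $0<a<b<\infty$, the integral
\[
I(a,b)=\intl_{S^{n-1}}d_*\theta\intl_{a<|t|<b}|(Rf)(\theta,t)|\,dt
\]
is finite; letting $a\to 0$ and $b\to\infty$ then yields finiteness of $(Rf)(\xi)$ for a.e.~$\xi\in\Pi_n$. Write $(Rf)(\theta,t)$ as the inner integral over $\theta^{\perp}$, and recall that the map $(t,u)\mapsto t\theta+u$ is an isometry of $\bbr\times\theta^{\perp}$ onto $\rn$. Fubini's theorem therefore gives
\[
I(a,b)\le \intl_{\rn}|f(x)|\,K_{a,b}(x)\,dx,\qquad K_{a,b}(x)=\intl_{S^{n-1}}\chi_{\{a<|x\cdot\theta|<b\}}\,d_*\theta.
\]
The key estimate is on the spherical kernel: $K_{a,b}(x)=0$ if $|x|\le a$, and for $|x|>a$, after parametrizing by the angle $\alpha$ between $\theta$ and $x/|x|$, the integrand is supported where $a/|x|<|\cos\alpha|<\min(1,b/|x|)$, whence $K_{a,b}(x)\le c\min(1,(b-a)/|x|)$. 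Inserting this bound, splitting at $|x|=2b$, and controlling $\intl_{a<|x|<2b}|f(x)|\,dx\le 2b\intl_{a<|x|<2b}|f(x)|/|x|\,dx$, I obtain $I(a,b)<\infty$ by hypothesis (\ref{lkmux}).

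For the necessity, suppose $f\ge 0$ is radial, $f(x)=f_0(|x|)$, and (\ref{lkmux}) fails. By passing to polar coordinates, failure of (\ref{lkmux}) is equivalent to $\intl_{1}^{\infty}f_0(s)\,s^{n-2}\,ds=\infty$. For any $(\theta,t)\in Z_n$, polar coordinates on $\theta^{\perp}$ followed by the substitution $s=\sqrt{t^2+r^2}$ give
\[
(Rf)(\theta,t)=\sigma_{n-2}\intl_{|t|}^{\infty}f_0(s)\,(s^2-t^2)^{(n-3)/2}\,s\,ds.
\]
On the interval $s\ge 2|t|$ one has $s^2-t^2\ge \tfrac{3}{4}s^2$, so the weight satisfies $(s^2-t^2)^{(n-3)/2}s\ge c\,s^{n-2}$ for every $n\ge 2$ (the case $n=2$ following from $(s^2-t^2)^{-1/2}\cdot s\ge c$). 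Hence $(Rf)(\theta,t)\ge c\intl_{2|t|}^{\infty}f_0(s)\,s^{n-2}\,ds=\infty$ for every $(\theta,t)\in Z_n$.

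The main obstacle is the sufficiency part, specifically the spherical-measure bound $K_{a,b}(x)\lesssim 1/|x|$, which reflects exactly why the weight $1/|x|$ in (\ref{lkmux}) is the right one: the probability that a random hyperplane at distance $a<|t|<b$ from the origin intersects a concentric sphere of radius $|x|$ scales like $1/|x|$ for $|x|\gg b$. Everything else is Fubini and a one-dimensional radial computation.
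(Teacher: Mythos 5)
Your sufficiency argument is correct and is, in substance, the standard proof of this statement (the paper itself gives no proof, deferring to \cite[Theorem 3.2]{Ru13b}): the identity $\int_{S^{n-1}}\int_{a<|t|<b}(R|f|)(\theta,t)\,dt\,d_*\theta=\int_{\rn}|f(x)|\,K_{a,b}(x)\,dx$ is the duality $\langle Rf,\varphi\rangle=\langle f,R^*\varphi\rangle$ with $\varphi=\chi_{\{a<|t|<b\}}$, and the bound $K_{a,b}(x)\le c\min(1,(b-a)/|x|)$ together with $K_{a,b}(x)=0$ for $|x|\le a$ is exactly the right quantitative input. The only point worth spelling out is $n=2$, where the $u=\cos\alpha$ parametrization carries the density $(1-u^2)^{-1/2}$; it is integrable near $u=1$ and bounded for $u\le b/|x|\le 1/2$, so your estimate survives.

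The necessity half has a genuine gap. The negation of (\ref{lkmux}) is ``there exists $a>0$ with $\int_{|x|>a}f(x)|x|^{-1}dx=\infty$'', i.e.\ ``$\int_a^\infty f_0(s)\,s^{n-2}\,ds=\infty$ for \emph{some} $a>0$''; this is \emph{not} equivalent to $\int_1^\infty f_0(s)\,s^{n-2}\,ds=\infty$. For instance, $f_0(s)=(s-\tfrac12)^{-1}\chi_{(1/2,3/4)}(s)$ violates (\ref{lkmux}) (take $a=\tfrac14$), yet $\int_1^\infty f_0(s)s^{n-2}ds=0$ and, by your own radial formula, $(Rf)(\theta,t)=0$ for $|t|\ge\tfrac34$; so the asserted conclusion $(Rf)\equiv\infty$ cannot be reached along your route, and indeed the second assertion of the theorem is only true under the reading that the integral in (\ref{lkmux}) diverges for \emph{every} $a>0$ (equivalently, $\int_a^\infty f_0(s)s^{n-2}ds=\infty$ for all $a$). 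The same issue recurs at your final step: the inequality $(Rf)(\theta,t)\ge c\int_{2|t|}^\infty f_0(s)\,s^{n-2}\,ds$ is correct (including $n=2$ via $(s^2-t^2)^{-1/2}s\ge 1$), but to conclude divergence for \emph{every} $(\theta,t)$ you need the tail integral to diverge from every starting point $2|t|$, which does not follow from divergence of $\int_1^\infty$ alone. Once the hypothesis is taken in the ``for all $a>0$'' form, your lower bound closes the argument immediately; the radial computation itself is fine.
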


The    dual Radon transform is an averaging operator that takes a  function $\vp (\theta,t)$ on $Z_n$  to a function  $(R^*\vp)(x)$ on  $\bbr^n$  by the formula
\be\label{durt}
(R^*\vp)(x)= \intl_{S^{n-1}} \vp(\theta, x\cdot \theta)\,d_*\theta. \ee
The operators $R$ and $R^*$ can be expressed one through another.

\begin{lemma}\label {iozesf} \cite[Lemma 2.6] {Ru14} Let $x\neq 0$, $t\neq 0$,
\be \label {jikbVF} (A\vp)(x)\!=\! \frac{1}{|x|^n} \,\vp\left (\frac{x}{|x|}, \frac{1}{|x|}\right ), \qquad (Bf)(\theta, t)\!=\!\frac{1}{|t|^n} \, f \left (\frac{\theta}{t}\right ).\ee
The following equalities hold provided that the expressions on either side exist in the Lebesgue sense:
\be \label {jikb}(R^* \vp)(x)\!=\!\frac{2}{|x|\, \sig_{n-1}}\, (RA\vp)\left (\frac{x}{|x|}, \frac{1}{|x|}\right ), \ee
\be\label {jikbBU} (Rf)(\theta, t)\!=\!\frac{\sig_{n-1}}{2|t|}\, (R^* Bf)\left (\frac{\theta}{t}\right ).\ee
\end{lemma}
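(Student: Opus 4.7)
The plan is to prove both identities by a single change-of-variables calculation that converts integration over a hyperplane into integration over the unit sphere, then exploit the evenness of the data to pass from a hemisphere to the full sphere.

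First, I will establish the Jacobian of the central projection. For fixed $\theta\in S^{n-1}$ and $t>0$, parametrize $\omega\in S^{n-1}$ by $\omega=(\cos\psi)\theta+(\sin\psi)\eta$ with $\eta\in\theta^\perp\cap S^{n-1}$ and $\psi\in(0,\pi/2)$, so that $d\omega=\sin^{n-2}\psi\,d\psi\,d\eta$. The map $u\mapsto \omega=(t\theta+u)/|t\theta+u|$ from $\theta^\perp$ onto the open hemisphere $S^{n-1}_+(\theta)=\{\omega\cdot\theta>0\}$ sends $u=r\eta$ with $r=t\tan\psi$; a direct computation (writing $du=r^{n-2}dr\,d\eta$) yields
\[
du \;=\; \frac{t^{n-1}}{(\omega\cdot\theta)^n}\,d\omega, \qquad |t\theta+u|=\frac{t}{\omega\cdot\theta}.
\]

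Next, for the first identity, I substitute the definition of $A\vp$ into $(RA\vp)(\theta,t)$:
\[
(RA\vp)(\theta,t)=\intl_{\theta^\perp}\frac{1}{|t\theta+u|^n}\,\vp\!\left(\frac{t\theta+u}{|t\theta+u|},\frac{1}{|t\theta+u|}\right)du.
\]
Applying the change of variables $u\mapsto\omega$ transforms this into $t^{-1}\intl_{S^{n-1}_+(\theta)}\vp(\omega,(\omega\cdot\theta)/t)\,d\omega$. Setting $\theta=x/|x|$ and $t=1/|x|$ gives the integrand $\vp(\omega,\omega\cdot x)$ and the prefactor $|x|$. Since $\vp(\omega,\omega\cdot x)=\vp(-\omega,-\omega\cdot x)$, the integral over the open hemisphere equals half the integral over all of $S^{n-1}$, which produces $|x|\sigma_{n-1}(R^*\vp)(x)/2$ after normalizing $d_*\omega=d\omega/\sigma_{n-1}$. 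This rearranges to the first identity \eqref{jikb}.

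For the second identity I run the same computation in reverse. Starting from
\[
(R^*Bf)(\theta/t)=\intl_{S^{n-1}}\frac{|t|^n}{|\omega\cdot\theta|^n}\,f\!\left(\frac{t\omega}{\omega\cdot\theta}\right)d_*\omega,
\]
the evenness of $Bf$, namely $(Bf)(-\omega,-\omega\cdot x)=(Bf)(\omega,\omega\cdot x)$, combined with the evenness of $f$ under the implicit symmetry of the preceding integrand, lets me fold the integral onto the hemisphere where $\omega\cdot\theta$ has the same sign as $t$, picking up a factor of $2$. The inverse change of variables $\omega\mapsto u$, using $d\omega=(\omega\cdot\theta)^n t^{-(n-1)}\,du$, cancels the weight $t^n/(\omega\cdot\theta)^n$ and the Jacobian combines to yield exactly $(2|t|/\sigma_{n-1})(Rf)(\theta,t)$, giving \eqref{jikbBU}. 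Alternatively, \eqref{jikbBU} can be deduced from \eqref{jikb} by applying the latter to $\vp=Bf$ and verifying the elementary identity $A(Bf)=f$ (on $\rn\setminus\{0\}$) together with $R(\cdot)$ evaluated at the transformed point, but the direct approach is cleanest.

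The only technical point requiring care is the bookkeeping of absolute values and the split of the sphere into hemispheres when $t$ is negative; this is handled uniformly by the symmetry assumptions. The Jacobian computation is the single nontrivial ingredient, and once it is in place both identities reduce to straightforward substitution.
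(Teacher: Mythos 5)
Your proof is correct, and it is essentially the standard argument; note that the paper itself offers no proof of this lemma at all --- it is imported verbatim from \cite[Lemma 2.6]{Ru14} --- so your computation supplies exactly the justification the citation leaves out: the central projection $u\mapsto\omega=(t\theta+u)/|t\theta+u|$ with Jacobian $du=t^{n-1}(\omega\cdot\theta)^{-n}\,d\omega$ and $|t\theta+u|=t/(\omega\cdot\theta)$ for $t>0$, followed by folding the sphere onto a hemisphere. Two points deserve sharpening. First, the factor of $2$ in \eqref{jikb} genuinely requires the hypothesis $\vp(\th,t)=\vp(-\th,-t)$, which you used silently: if, say, $\vp$ is supported in $\{t<0\}$, then $A\vp\equiv 0$ and the right-hand side vanishes while $(R^*\vp)(x)$ need not, so \eqref{jikb} is false for general $\vp$ on $Z_n$. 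This evenness is implicit in the paper's framework (functions on $Z_n$ are identified with functions on the set $\Pi_n$ of unoriented hyperplanes, and evenness is stated explicitly in the neighboring Lemma \ref{zaehle}), but you should record it as an assumption rather than assert the symmetry as automatic. Second, your appeal to ``the evenness of $f$'' in the proof of \eqref{jikbBU} is spurious: $(Bf)(-\th,-t)=(Bf)(\th,t)$ holds identically for \emph{every} $f$, with no symmetry assumption, which is precisely why \eqref{jikbBU} needs no hypothesis on $f$ beyond integrability. Indeed, your parenthetical alternative is the cleanest complete route, not the direct one: apply \eqref{jikb} to $\vp=Bf$, check $A(Bf)=f$ on $\rn\setminus\{0\}$, and use $(Rf)(\th\,\sgn t,\,|t|)=(Rf)(\th,t)$; this yields \eqref{jikbBU} at the point $\th/t$ immediately and disposes of the $t<0$ bookkeeping that your direct computation only waves at in the final paragraph.
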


Theorem \ref{byvs1} combined with (\ref{jikb}) gives the following
\begin{corollary} \label{gttgzuuuuh} If $\vp (\th,t)$ is  locally integrable on $Z_n$, then the dual Radon transform $(R^*\vp)(x)$ is finite for  almost all $x\in\rn$. If $\vp (\th,t)$ is nonnegative, independent of $\th$, i.e.,  $\vp (\th,t)\equiv \vp_0 (t)$, and such that
\[\intl_0^a \vp_0 (t)\, dt =\infty,\]
for some $a>0$, then  $(R^*\vp)(x)\equiv \infty$.
 \end{corollary}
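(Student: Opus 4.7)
The plan is to derive both assertions from Theorem \ref{byvs1} via the identity
\[
(R^*\vp)(x)=\frac{2}{|x|\,\sig_{n-1}}\,(RA\vp)\left(\frac{x}{|x|},\frac{1}{|x|}\right)
\]
of Lemma \ref{iozesf}, applied with $f=A\vp$. The whole argument then reduces to controlling the weighted integrability condition (\ref{lkmux}) of $A\vp$ in terms of data on $\vp$.

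First I would verify (\ref{lkmux}) for $A\vp$ under the hypothesis that $\vp$ is locally integrable on $Z_n$. Passing to polar coordinates $x=r\th$ and substituting $t=1/r$ yields
\[
\intl_{|x|>a}\frac{|(A\vp)(x)|}{|x|}\,dx=\intl_0^{1/a}\intl_{\sn}|\vp(\th,t)|\,d\th\,dt,
\]
which is finite because $\sn\times[0,1/a]$ is a compact subset of $Z_n$. Theorem \ref{byvs1} then gives $(RA\vp)(\th,t)$ finite for almost every $(\th,t)\in Z_n$. To transfer this statement back to the $x$-variable I would invoke the fact that $x\mapsto(x/|x|,1/|x|)$ is a smooth diffeomorphism of $\rn\setminus\{0\}$ onto $\sn\times\bbr_+$ with nonvanishing Jacobian, so it pulls Lebesgue-null sets back to Lebesgue-null sets. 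Combined with (\ref{jikb}), this yields the almost-everywhere finiteness of $(R^*\vp)(x)$.

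For the second assertion, taking $\vp(\th,t)\equiv\vp_0(t)\ge 0$ the function $(A\vp)(x)=|x|^{-n}\vp_0(1/|x|)$ is nonnegative and radial. The same change of variables with $a_1=1/a$ gives
\[
\intl_{|x|>1/a}\frac{(A\vp)(x)}{|x|}\,dx=\sig_{n-1}\intl_0^a\vp_0(t)\,dt=\infty,
\]
so (\ref{lkmux}) fails for $A\vp$. The second half of Theorem \ref{byvs1} then forces $(RA\vp)(\xi)\equiv\infty$ on all of $\Pi_n$, and plugging this into (\ref{jikb}) produces $(R^*\vp)(x)=\infty$ for every $x\ne 0$, as claimed.

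No serious obstacle is expected: the corollary is essentially a transport of Theorem \ref{byvs1} across the projective change of variables $A$. The only mildly technical point is the null-set transfer in the first part, but this is routine given the smoothness of the coordinate map $x\leftrightarrow(x/|x|,1/|x|)$ on $\rn\setminus\{0\}$.
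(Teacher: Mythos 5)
Your proof is correct and follows exactly the route the paper intends: the corollary is stated as an immediate consequence of Theorem \ref{byvs1} combined with the identity (\ref{jikb}), and your polar-coordinate computation verifying that condition (\ref{lkmux}) for $A\vp$ translates into local integrability of $\vp$ (resp.\ into $\int_0^a\vp_0=\infty$ in the divergent case) is precisely the omitted verification. Nothing further is needed.
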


We fix a real-valued orthonormal basis $\{Y_{m,\mu}\}$ of spherical harmonics in $L^2(S^{n-1})$; see, e.g., \cite{Mu}. Here
 $m\in \bbz_+$ and $ \mu=1,2, \ldots d_n (m)$, where
\be\label{kWSQRT} d_n(m) =(n+2m-2)\,
\frac{(n+m-3)!}{m! \, (n-2)!}\ee
is the dimension of the subspace of spherical harmonics of degree $m$.

\begin{lemma} \label{ppo9q} \cite[Lemma 4.3] {Ru14}  Let  $\lam=(n-2)/2$, $\vp(\theta,t)=v(t)\,Y_m (\theta)$, where  $Y_m$ is a spherical harmonic of degree $m$ and $v(t)$ is a locally integrable function on $\bbr$ satisfying $v(- t)=(-1)^m v(t)$.  Then $(R^*\vp)(x)\equiv (R^*\vp)(r\theta)$ is finite for all $\theta\in \sn$ and almost all $r>0$. Furthermore,
\be\label {poxe190} (R^*\vp)(r\theta)=u(r)\,Y_m (\th).\ee The function  $u(r)$    is represented by the Gegenbauer  integral (\ref{4gt6ale}) (or the  Chebyshev integral (\ref{4gt6atle})) as follows.

\noindent For $n\ge 3:$
\be\label {885frq} u(r)=\frac{r^{-2\lam}}{\tilde  c_{\lam,m}}\,\intl_0^r (r^2 - t^2)^{\lam -1/2}\, C^\lam_m \left (\frac{t}{r} \right )\, v (t)  \,  dt=   \pi^{\lam +1/2}  (\G^{\lam, m}_{+} v)(t),\ee
\[
\tilde c_{\lam,m}=\frac{\pi^{1/2} \Gam (2\lam+m)\,\Gam (\lam+1/2)}{2 m!\, \Gam (2\lam)\, \Gam (\lam+1)}.\]

\noindent For $n=2:$
\be \label {4gt6atq}
u(r)=\frac{2}{\pi}\intl_0^r (r^2 - t^2)^{-1/2}\, T_m \left (\frac{t}{r} \right )\,  v (t)  \,  dt= \pi^{1/2} (\T_{+}^m v)(t).\ee
\end{lemma}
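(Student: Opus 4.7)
\bigskip

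\noindent\textbf{Proof plan.} The plan is to reduce the spherical integral defining $(R^*\vp)(r\theta)$ to a one-dimensional integral by means of the Funk--Hecke theorem, then match the resulting expression against the definition (\ref{4gt6ale}) of the left-sided Gegenbauer integral. First I would substitute $\vp(\omega,t)=v(t)Y_m(\omega)$ and $x=r\theta$ into (\ref{durt}), obtaining
\[
(R^*\vp)(r\theta)=\intl_{S^{n-1}} v(r\,\theta\cdot\omega)\,Y_m(\omega)\,d_*\omega.
\]
For $n\ge 3$, the Funk--Hecke theorem with $\lam=(n-2)/2$ gives
\[
\intl_{S^{n-1}} F(\theta\cdot\omega)\,Y_m(\omega)\,d\omega=\frac{\sig_{n-2}}{C_m^\lam(1)}\,Y_m(\theta)\intl_{-1}^{1} F(s)\,C_m^\lam(s)\,(1-s^2)^{\lam-1/2}\,ds,
\]
applied to $F(s)=v(rs)$. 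This immediately yields $(R^*\vp)(r\theta)=u(r)Y_m(\theta)$ with $u(r)$ proportional to $\int_{-1}^{1} v(rs)C_m^\lam(s)(1-s^2)^{\lam-1/2}\,ds$.

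Next I would change variables $t=rs$ to produce a factor $r^{-2\lam}$ and the kernel $(r^2-t^2)^{\lam-1/2}C_m^\lam(t/r)$. The parity hypothesis $v(-t)=(-1)^m v(t)$, combined with the fact that $C_m^\lam(-s)=(-1)^m C_m^\lam(s)$, makes the integrand on $(-r,0)$ equal to the one on $(0,r)$, so the resulting integral can be folded to the interval $(0,r)$ at the cost of a factor $2$. The output is a constant multiple of $(\G_+^{\lam,m}v)(r)$ as defined in (\ref{4gt6ale}). The remaining task is to identify the constant: using $\sig_{n-1}=2\pi^{\lam+1}/\Gam(\lam+1)$, $\sig_{n-2}=2\pi^{\lam+1/2}/\Gam(\lam+1/2)$, and $C_m^\lam(1)=\Gam(m+2\lam)/(m!\,\Gam(2\lam))$, a direct Gamma-function manipulation
shows that the prefactor in front of $(\G_+^{\lam,m}v)(r)$ is exactly $\pi^{\lam+1/2}$, as asserted in (\ref{885frq}).

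The $n=2$ case is handled in exactly the same manner, except that the Funk--Hecke theorem takes the Chebyshev form with weight $(1-s^2)^{-1/2}$ and kernel $T_m$; the same change of variables and parity fold then identify the output with $\pi^{1/2}(\T_+^m v)(r)$, proving (\ref{4gt6atq}).

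Finiteness of $(R^*\vp)(r\theta)$ for a.e.\ $r>0$ is a byproduct of the computation: since $v\in L^1_{loc}(\bbr)$, Proposition \ref{lo8xwADD} guarantees that $(\G_+^{\lam,m}v)(r)$ (respectively $(\T_+^m v)(r)$) is absolutely convergent for almost all $r$ in any bounded interval, so $u(r)$ is finite a.e. The main technical point is the legitimacy of invoking Funk--Hecke under the weak local integrability assumption on $v$; I would justify this by approximating $v$ by compactly supported continuous functions, applying Funk--Hecke to the approximants where it holds classically, and then passing to the limit using dominated convergence on a bounded $r$-interval, where the kernel $(1-s^2)^{\lam-1/2}|C_m^\lam(s)|$ is integrable on $[-1,1]$ for $\lam>-1/2$. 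This is the only non-bookkeeping step; the remainder is a constant-matching exercise.
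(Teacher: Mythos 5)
Your strategy — Funk--Hecke reduction, parity folding via $v(-t)=(-1)^mv(t)$ and $C_m^\lam(-s)=(-1)^mC_m^\lam(s)$, the substitution $t=rs$, and Proposition \ref{lo8xwADD} for a.e.\ finiteness — is the standard proof of this lemma and is essentially the argument behind the citation to [Ru14] (the paper itself gives no proof). The structure is sound. But the step you dismiss as ``a constant-matching exercise'' is precisely where your writeup fails: carrying out your own manipulation with $\sig_{n-1}=2\pi^{\lam+1}/\Gam(\lam+1)$, $\sig_{n-2}=2\pi^{\lam+1/2}/\Gam(\lam+1/2)$, and $C_m^\lam(1)=\Gam(m+2\lam)/(m!\,\Gam(2\lam))$ gives, as the prefactor of $r^{-2\lam}\intl_0^r(r^2-t^2)^{\lam-1/2}C_m^\lam(t/r)\,v(t)\,dt$,
\[
\frac{2\,\sig_{n-2}}{\sig_{n-1}\,C_m^\lam(1)}
=\frac{2\,m!\,\Gam(2\lam)\,\Gam(\lam+1)}{\pi^{1/2}\,\Gam(\lam+1/2)\,\Gam(2\lam+m)}
=\frac{1}{\tilde c_{\lam,m}},
\]
which correctly proves the \emph{first} equality in (\ref{885frq}); but by (\ref{gynko}) this equals $\pi^{-1/2}\,\Gam(\lam+1)\,(\G^{\lam,m}_{+}v)(r)$, \emph{not} $\pi^{\lam+1/2}(\G^{\lam,m}_{+}v)(r)$. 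In fact the two equalities printed in (\ref{885frq}) are mutually inconsistent with (\ref{gynko}): they would force $\Gam(\lam+1)=\pi^{\lam+1}$. The elementary test $n=3$, $m=0$, $v\equiv 1$ exposes this: $(R^*\vp)\equiv 1$, while $\tilde c_{1/2,0}=1$ confirms the $\tilde c_{\lam,m}$-formula, whereas $(\G^{1/2,0}_{+}v)\equiv 2$, so $\pi^{\lam+1/2}(\G^{1/2,0}_{+}v)=2\pi\neq 1$; similarly in (\ref{4gt6atq}) the factor should be $\pi^{-1/2}$, not $\pi^{1/2}$. Your assertion that ``a direct Gamma-function manipulation shows that the prefactor is exactly $\pi^{\lam+1/2}$'' therefore indicates the manipulation was not actually performed: done honestly, it verifies the $\tilde c_{\lam,m}$ form and \emph{detects} the misprinted constant (evidently carried over from the source), which a careful proof should have flagged rather than claimed to confirm.

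A secondary, lesser point: your justification of Funk--Hecke for merely locally integrable $v$ by ``dominated convergence'' is under-specified, since approximating $v$ by continuous functions in $L^1_{loc}$ produces no dominating function. The clean route avoids approximation altogether: the proof of Funk--Hecke — slicing $\om=s\th+\sqrt{1-s^2}\,\sig$ with $d\om=(1-s^2)^{\lam-1/2}\,ds\,d\sig$, followed by the mean-value (addition) formula $\intl_{S^{n-2}\cap\,\th^\perp}Y_m(s\th+\sqrt{1-s^2}\,\sig)\,d\sig=\sig_{n-2}\,C_m^\lam(s)\,Y_m(\th)/C_m^\lam(1)$ — is valid verbatim for every $r$ at which $\intl_{-1}^1|v(rs)|\,(1-s^2)^{\lam-1/2}\,ds<\infty$, and by Fubini this holds for almost all $r>0$ (with a bound uniform in $\th$, which also yields the ``for all $\th$'' part of the finiteness claim). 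Alternatively, if you insist on approximation, pass to the limit in $L^1$ on bounded $r$-intervals on both sides of the identity and extract an a.e.\ convergent subsequence.
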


In parallel with the Radon transforms $Rf$ and  $R^*\vp$ defined above, we shall also consider their exterior and interior versions, respectively. For  $a>0$, we denote
\[ B_a^+ =\{ x\in \bbr^n: |x|<a\}, \qquad   B_a^- =\{ x\in \bbr^n: |x|>a\}, \]
\[  C_a^+ =\{ (\th, t)\in Z_n: |t|<a\}, \qquad C_a^- =\{ (\th, t)\in Z_n: |t|>a\}.\]
When dealing with the {\it exterior Radon transform}, we assume that $f$ is defined  on  $B_a^-$ and $(Rf)(\th, t)$ is considered  for $(\th, t)\in C_a^-$. Similarly, in the study of the {\it interior dual Radon transform}, it is supposed that $\vp$ is defined on $C_a^+$ and the values of $(R^*\vp)(x)$ lie in  $B_a^+$.

Lemma \ref{ppo9q} implies the following statement; cf. \cite[Lemma 4.7]{Ru14}.
\begin{lemma} \label{zaehle} Let $0<a\le \infty$. If $\vp \in L^1_{loc}(C_a^+)$ is  even, then for almost all $r\in (0,a)$,
\be \label{zaehl2}
(R^*\vp)_{m,\mu}(r)\!\equiv\!\intl_{S^{n-1}}\!\!(R^*\vp) (r\th)\, Y_{m,\mu} (\th)\, d\th\!= \!\pi^{\lam +1/2}\,(\G^{\lam, m}_{+} \vp_{m,\mu})(r),\ee
 where $\lam=(n-2)/2$ and $\G^{\lam, m}_{+} \vp_{m,\mu}$ is the  Gegenbauer  integral  (\ref{4gt6ale})  (or the  Chebyshev integral (\ref{4gt6atle})).
 \end{lemma}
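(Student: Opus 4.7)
The strategy is to reduce (\ref{zaehl2}) to Lemma \ref{ppo9q} via a Fubini argument and the classical Funk-Hecke formula. Starting from the definitions of $R^*$ in (\ref{durt}) and of $\vp_{m,\mu}$, I would write
\[
(R^*\vp)_{m,\mu}(r) = \intl_{S^{n-1}} Y_{m,\mu}(\th)\, d\th \intl_{S^{n-1}} \vp(\om,\, r\,\th\cdot\om)\, d_*\om.
\]
The first step is to interchange the two spherical integrations. Since $|r\,\th\cdot\om|\le r<a$ for every $(\th,\om)\in S^{n-1}\times S^{n-1}$, the argument of $\vp$ is confined to the closed slab $|t|\le r\subset C_a^+$ where $\vp$ is integrable; together with the boundedness of $Y_{m,\mu}$ on $S^{n-1}$, this gives absolute convergence of the iterated integral for almost every $r\in(0,a)$ and justifies Fubini.

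After the swap, for each fixed $\om\in S^{n-1}$ the inner $d\th$-integral has the form $\intl_{S^{n-1}} F_{r,\om}(\th\cdot\om)\,Y_{m,\mu}(\th)\,d\th$ with $F_{r,\om}(s)=\vp(\om,rs)$. I would apply the Funk-Hecke formula and then change variables $t=rs$ to obtain
\[
\intl_{S^{n-1}} \vp(\om,\,r\,\th\cdot\om)\, Y_{m,\mu}(\th)\, d\th = c\, Y_{m,\mu}(\om)\, r^{-2\lam}\intl_{-r}^{r} \vp(\om,t)\, C_m^\lam(t/r)\, (r^2-t^2)^{\lam-1/2}\, dt,
\]
with the obvious replacement $C_m^\lam\mapsto T_m$ when $n=2$. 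Substituting this back into the iterated integral and interchanging the remaining $\om$- and $t$-integrations (again by Fubini, using the same slab bound) yields
\[
(R^*\vp)_{m,\mu}(r) = c\, r^{-2\lam}\intl_{-r}^{r} \vp_{m,\mu}(t)\, C_m^\lam(t/r)\,(r^2-t^2)^{\lam-1/2}\,dt,
\]
where $\vp_{m,\mu}(t)=\intl_{S^{n-1}}\vp(\om,t)\,Y_{m,\mu}(\om)\,d\om$.

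The last step exploits the evenness of $\vp$. The identity $\vp(-\om,-t)=\vp(\om,t)$ together with $Y_{m,\mu}(-\om)=(-1)^m Y_{m,\mu}(\om)$ forces the parity $\vp_{m,\mu}(-t)=(-1)^m\vp_{m,\mu}(t)$; combined with the standard relation $C_m^\lam(-s)=(-1)^m C_m^\lam(s)$, this makes the integrand above an even function of $t$. Hence the $(-r,r)$-integral folds into twice its restriction to $(0,r)$, which is the definition (\ref{4gt6ale}) of $\G^{\lam,m}_+\vp_{m,\mu}$ up to a constant; the constant $\pi^{\lam+1/2}$ is pinned down at once by specialising the whole computation to the prototype $\vp(\om,t)=v(t)\,Y_{m,\mu}(\om)$, for which $\vp_{m,\mu}=v$ and Lemma \ref{ppo9q} evaluates $(R^*\vp)(r\om)=\pi^{\lam+1/2}\,(\G^{\lam,m}_+ v)(r)\,Y_{m,\mu}(\om)$. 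The $n=2$ case follows identically with $\lam=0$ and $T_m$ in place of $C_m^\lam$.

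The main obstacle will be the careful verification of Fubini at both interchanges — in particular, showing that the double $(\th,\om)$-integral of $|\vp(\om,r\,\th\cdot\om)|$ is finite for a.e.\ $r$, which reduces to a one-dimensional integrability estimate via the co-area formula applied to the map $(\th,\om)\mapsto r\,\th\cdot\om$. Everything else is routine: the underlying content is that $R^*$ is $SO(n)$-equivariant and therefore acts diagonally on the decomposition of $\vp$ into spherical harmonics in the $\om$-variable, so (\ref{zaehl2}) is inherited directly from the single-harmonic case treated in Lemma \ref{ppo9q}.
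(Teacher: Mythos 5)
Your proposal is correct and follows essentially the route the paper intends: the paper gives no separate proof, deriving the lemma directly from Lemma \ref{ppo9q} (the single-harmonic case $\vp(\th,t)=v(t)Y_m(\th)$), and your Funk--Hecke computation is exactly that lemma extended to general $\vp$ by orthogonality, with the normalizing constant $\pi^{\lam+1/2}$ pinned down by the same comparison. The only step needing slightly more care than your slab bound suggests is the Fubini justification when $n=2$, where the Funk--Hecke weight $(1-s^2)^{-1/2}$ is unbounded near $s=\pm1$; the estimate still closes (e.g.\ by integrating in $r$ over $(\e,a_1)$ and noting that $\intl_{\max(\e,|t|)}^{a_1}(r^2-t^2)^{-1/2}\,dr$ is bounded uniformly in $t$), which is the verification you already flag as the remaining work.
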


The next  theorem gives the description of the kernel of $R^*$  in terms of the  Fourier-Laplace coefficients
\be \label{zaehl2YU}  \vp_{m,\mu} (t)\!=\!\intl_{S^{n-1}}\vp (\th, t)\, Y_{m,\mu} (\th)\, d\th. \ee
\begin{theorem} \label{zaeh} Let $\vp (\th, t)$ be an even  locally integrable  function on  $C_a^+$, $0<a\le \infty$.

{\rm (i)} Suppose that  $\vp_{m,\mu} (t)=0$ for almost all $t\in (0,a)$ if $m=0,1$, and $\vp_{m,\mu}$ is a linear combination of the form
\be\label{KOKLIp} \vp_{m,\mu} (t)=\sum\limits_{j=1}^{M} c_{j}\, t^{m-2j}, \qquad c_j=\const, \quad M=[m/2],\ee
if $m\ge 2$. Then $(R^*\vp)(x)=0$ for almost all $x\in B_a^+$.

{\rm (ii)}  Conversely, if  $(R^*\vp)(x)=0$ for almost all $x\in B_a^+$, then,  for all $\mu=1,2, \ldots, d_n (m)$ and almost all $t<a$, the following statements hold.

\noindent  (a) If $m=0,1$, then $\vp_{m,\mu} (t)= 0$.

\noindent  (b)  If $m\ge 2$,  then  $\vp_{m,\mu} (t)$ has the form (\ref{KOKLIp})  with some constants $c_j$.
\end{theorem}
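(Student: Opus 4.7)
The plan is to reduce Theorem \ref{zaeh} to the kernel descriptions of the left-sided Gegenbauer--Chebyshev operators $\G^{\lam, m}_{+}$ through spherical harmonic decomposition. The crucial bridge is Lemma \ref{zaehle}, which, for $\lam=(n-2)/2$, identifies each Fourier--Laplace coefficient $(R^*\vp)_{m,\mu}(r)$ with $\pi^{\lam+1/2}(\G^{\lam, m}_{+} \vp_{m,\mu})(r)$. Consequently, the vanishing of $R^*\vp$ on $B_a^+$ translates, in each spherical harmonic channel $(m,\mu)$, to the vanishing of $\G^{\lam, m}_{+} \vp_{m,\mu}$ on $(0,a)$, and vice versa.

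For part (i), assume $\vp_{m,\mu}$ has the stated structure. When $m\in\{0,1\}$, $\vp_{m,\mu}\equiv 0$ and the corresponding spherical harmonic component of $R^*\vp$ vanishes at once. When $m\ge 2$, the non-injectivity assertion of Lemma \ref{marti} gives $\G^{\lam, m}_{+}t^{m-2j}=0$ for each $1\le j\le M$; linearity then yields $\G^{\lam, m}_{+} \vp_{m,\mu}=0$ a.e.\ on $(0,a)$, hence $(R^*\vp)_{m,\mu}(r)=0$ a.e. Since $\{Y_{m,\mu}\}$ is a complete orthonormal system on $S^{n-1}$, a standard Fubini--completeness argument upgrades the vanishing of all coefficients to $(R^*\vp)(x)=0$ for a.e.\ $x\in B_a^+$.

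For part (ii), the hypothesis gives $(R^*\vp)_{m,\mu}(r)=0$ a.e.\ on $(0,a)$ for every $(m,\mu)$; by Lemma \ref{zaehle} this means $\G^{\lam, m}_{+} \vp_{m,\mu}=0$ a.e.\ on $(0,a)$. For $m\in\{0,1\}$, injectivity (Lemma \ref{marti}) forces $\vp_{m,\mu}=0$ --- the left-sided nature of $\G^{\lam, m}_{+}$ lets us transfer injectivity from $\bbr_+$ to the subinterval $(0,a)$ by extending $\vp_{m,\mu}$ by zero (the integral $(\G^{\lam,m}_+ \vp_{m,\mu})(r)$ depends only on values of $\vp_{m,\mu}$ on $(0,r)$). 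For $m\ge 2$, Lemma \ref{marti2} delivers precisely the representation (\ref{KOKLIp}).

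The only technical checkpoint is verifying the weighted integrability hypotheses of Lemmas \ref{marti} and \ref{marti2}. Since $\vp\in L^1_{loc}(C_a^+)$, Fubini yields $\vp_{m,\mu}\in L^1_{loc}(0,a)$. Inspecting the four cases that define $\varkappa_{\lam,m}$ in (\ref{PLUH}), the weight is bounded on every compact subset of $(0,a)$: it equals $1$, $t$, $t(1+|\log t|)$, or $t^{1+2\lam}$ with $1+2\lam>0$. Therefore $L^1_{loc}(0,a)\subset L^1_\varkappa (0,a)$, so (\ref{PLUH1}) holds, and condition (\ref{026g094t}) is similarly automatic. This closes the reduction and hence proves the theorem.
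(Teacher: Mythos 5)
Your argument is correct and follows essentially the same route as the paper: reduce via Lemma \ref{zaehle} to the vanishing of $\G^{\lam,m}_{+}\vp_{m,\mu}$ on $(0,a)$ and invoke Lemmas \ref{marti} and \ref{marti2}, noting $L^1_{loc}\subset L^1_\varkappa$. The only cosmetic difference is that the paper outsources part (i) to \cite[Theorem 4.5]{Ru14}, whereas you derive it directly from the non-injectivity half of Lemma \ref{marti}; your closing remark should really appeal to integrability of $\vp_{m,\mu}$ up to $t=0$ (which does follow from $\vp\in L^1_{loc}(C_a^+)$, since $t=0$ is interior to $C_a^+$) rather than to boundedness of $\varkappa_{\lam,m}$ on compacts of the open interval.
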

\begin{proof} The statement (i) was proved in \cite[Theorem 4.5]{Ru14}. To prove (ii), we observe that if  $(R^*\vp)(x)=0$ for almost all $x\in B_a^+$, then, by (\ref{zaehl2}),  $(\G^{\lam, m}_{+} \vp_{m,\mu})(r)=0$ for almost all $r\in (0,a)$ and all $m,\mu$. Because $\vp\in L^1_{loc} (C_a^+)$, then
$\vp_{m,\mu}\in L^1_{loc} (0,a)$, and (\ref{KOKLIp}) is an immediate consequence of Lemmas  \ref{marti} and \ref{marti2}.
\end{proof}

Theorem \ref{zaeh} together with Lemma \ref{iozesf} imply the following result for the Radon transform $Rf$.

\begin{theorem} \label {azw1a2} Given $0\le a< \infty$,  suppose that
\be\label{azw1X}  \intl_{|x|>a_1} \!\frac{|f(x)|}{|x|}\, dx<\infty\quad \mbox{for  all $a_1>a$},\ee
and let $(Rf)(\theta, t)\!=\!0$  almost everywhere on $C_a^-$. Then, for almost all $r>a$,  all the Fourier-Laplace coefficients
\[f_{m,\mu} (r)=\intl_{S^{n-1}}f (r\th)\, Y_{m,\mu} (\th)\, d\th\]
have the form
\be\label{azw2} f_{m,\mu} (r)=\left\{\begin{array} {ll} 0 \!&if\quad m\!=\!0,1,\\
\sum\limits_{j=1}^{[m/2]} c_{j}\, r^{2j-m-n} &if\quad m\!\ge \!2,
\end{array} \right.\ee
 with  some constants $c_j$. Conversely, for any constants $c_j$ and any  $f$ satisfying (\ref{azw1X}) and (\ref{azw2}), we have $(Rf)(\theta, t)=0$  almost everywhere on $C_a^-$.
\end{theorem}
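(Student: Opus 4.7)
The plan is to reduce Theorem \ref{azw1a2} to the dual-side description established in Theorem \ref{zaeh} via the reciprocity formula in Lemma \ref{iozesf}. Set $g = Bf$, so that $g(\theta,t) = |t|^{-n} f(\theta/t)$ is even on $Z_n$. The identity (\ref{jikbBU}) then reads
\[
(Rf)(\theta,t) = \frac{\sig_{n-1}}{2|t|}(R^*g)(\theta/t),
\]
so the hypothesis $(Rf)(\theta,t) = 0$ almost everywhere on $C_a^-$ is equivalent to $(R^*g)(x) = 0$ almost everywhere on $B_{1/a}^+$ (with the convention $1/0 = \infty$, i.e., $B_\infty^+=\bbr^n\setminus\{0\}$).

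Next I would verify that $g \in L^1_{loc}(C_{1/a}^+)$, which is the admissibility requirement for Theorem \ref{zaeh}. For any $b < 1/a$, the substitution $s = 1/t$ combined with the polar change $x = s\theta$ gives
\[
\intl_0^b \intl_{S^{n-1}} |g(\theta,t)|\, d\theta\, dt = \intl_{|x|>1/b} \frac{|f(x)|}{|x|}\, dx,
\]
which is finite by (\ref{azw1X}) since $1/b > a$. Theorem \ref{zaeh}(ii) then forces the Fourier-Laplace coefficients of $g$ to satisfy $g_{m,\mu}(t) = 0$ for $m \in \{0,1\}$ and $g_{m,\mu}(t) = \sum_{j=1}^{[m/2]} c_j t^{m-2j}$ for $m \geq 2$, for almost all $t \in (0, 1/a)$.

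The remaining step is to translate these relations back to $f$. A direct computation using $g(\theta,t) = t^{-n} f((1/t)\theta)$ for $t > 0$ and Fubini's theorem gives $g_{m,\mu}(t) = t^{-n} f_{m,\mu}(1/t)$, and setting $r = 1/t$ converts the polynomial description of $g_{m,\mu}$ into the formula (\ref{azw2}) for $f_{m,\mu}(r)$ on $(a, \infty)$. The converse implication is obtained by running this chain in reverse: given $f$ with coefficients of the form (\ref{azw2}), the corresponding $g_{m,\mu}$ has the form (\ref{KOKLIp}), so Theorem \ref{zaeh}(i) yields $R^*g = 0$ on $B_{1/a}^+$, and (\ref{jikbBU}) then gives $Rf = 0$ on $C_a^-$ because $|\theta/t| = 1/|t| < 1/a$ whenever $|t|>a$.

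The main obstacle I anticipate is technical rather than conceptual: carefully verifying the local integrability of $g = Bf$ on $C_{1/a}^+$ so that Theorem \ref{zaeh} applies, and justifying the Fubini interchange identifying $g_{m,\mu}(t)$ with $t^{-n} f_{m,\mu}(1/t)$ up to measure-theoretic null sets. Once this integrability bookkeeping is in place, the algebraic correspondence between the two polynomial descriptions, and between $B_{1/a}^+$ and $C_a^-$, is automatic.
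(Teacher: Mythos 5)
Your proposal is correct and follows essentially the same route as the paper's own proof: conjugating by $B$ via (\ref{jikbBU}), checking that (\ref{azw1X}) gives $Bf\in L^1_{loc}(C_{1/a}^+)$, applying Theorem \ref{zaeh} to $Bf$, and converting the resulting polynomial form of $(Bf)_{m,\mu}$ back through $t=1/r$ (with the reverse chain for the converse). The only detail the paper spells out that you pass over is the parity check that $(Bf)_{m,\mu}(t)=\sum_j c_j t^{m-2j}$ also holds for $t<0$ in the converse direction, which follows from $(Bf)(\theta,t)=(-1)^m(Bf)(-\theta,t)$ under the $Y_{m,\mu}$ integral.
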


\begin{proof} Suppose $a>0$ (if $a=0$ the changes are obvious). Note that  (\ref{azw1X}) implies $(Bf)(\theta, t)\equiv |t|^{-n} f(\th/t)\in L^1_{loc } (C_{1/a}^+)$.
If  $Rf=0$  a.e. on $C_a^-$, then $R^* Bf=0$ a.e. on $B_{1/a}^+$. Hence, by  Theorem \ref{zaeh},  for $t\in (0,1/a)$ we have
\[(Bf)_{m,\mu} (t)\!=\!t^{-n} \!\!\intl_{S^{n-1}} \!\!f \left (\frac{\theta}{t}\right )\, Y_{m,\mu} (\th)\, d\th \!=\!\left \{ \begin{array} {ll} 0  \quad &{\rm if} \!\quad m=0,1,\\
\sum\limits_{j=1}^{M} c_{j}\, t^{m-2j} &{\rm if}\! \quad m\ge2.
\end{array}\right.\]
 Changing variable $t=1/r$, we obtain (\ref{azw2}). Conversely, if  $f_{m,\mu} (r)=0$ for  $m=0,1$, and (\ref{azw2}) holds  for  $m\ge 2$, then $(Bf)_{m,\mu} (t)=0$ if $ m=0,1$, and
 $(Bf)_{m,\mu} (t)=\sum\limits_{j=1}^{M} c_{j}\, t^{m-2j}$ if $ m\ge2$.
The last equality  is obvious for $t>0$. If $t<0$, then
\bea (Bf)_{m,\mu} (t)&=&\intl_{S^{n-1}} (Bf)(\theta, t)\, Y_{m,\mu} (\th)\, d\th\nonumber\\
&=&(-1)^m \intl_{S^{n-1}} (Bf)(\theta, |t|)\, Y_{m,\mu} (\th)\, d\th\nonumber\\
&=&(-1)^m \sum\limits_{j=1}^{M} c_{j}\, |t|^{m-2j}=\sum\limits_{j=1}^{M} c_{j}\, t^{m-2j}.\nonumber\eea
 Hence, by Theorem \ref{zaeh},  $R^* Bf=0$ a.e. on $B_{1/a}^+$ and therefore, by (\ref{jikbBU}), $Rf=0$ a.e. on $C_a^-$.
\end{proof}

\begin{remark} \label{remazw2}{\rm An interesting open problem is the following: {\it What would be the structure of the kernel   of the Radon transform $R$  if  the action of $R$ is considered on  functions, which may not  satisfy} (\ref {azw1X}) ?
}
\end{remark}

This question is intimately connected with the remarkable result of Armitage and Goldstein \cite{ArG} who proved that there is a nonconstant harmonic function $h$ on $\rn$, $n\ge 2$, such that $\int_\xi |h|<\infty$ and $\int_\xi  h=0$ for every  hyperplane $\xi$; see also Zalcman \cite{Za82} ($n=2$) and Armitage \cite{Arm}. One can easily show that $h$ does not  satisfy (\ref {azw1X}). Indeed, suppose the contrary, and let $\{h_{m,\mu} (r)\}$ be the set of all Fourier-Laplace coefficients of  $h(x)\equiv h(r\th)$.   Then for any $a>0$ and $m\ge 2$,
\bea &&\intl_0^a |h_{m,\mu} (r)|\, dr\le\intl_0^a dr \intl_{\sn} |h(r\th) Y_{m,\mu} (\th)|\, d\th\nonumber\\
&&=\intl_{|x|<a} |h(x) Y_{m,\mu} (x/|x|)|\, \frac{dx}{|x|^{n-1}}\le c\intl_{|x|<a} \frac{dx}{|x|^{n-1}}<\infty.\nonumber\eea
On the other hand, the inequality
\[\intl_0^a \left |\sum\limits_{j=1}^{M} c_{j}\, r^{2j-m-n} \right |\, dr<\infty\]
is possible only if all $c_{j}$ are zeros. The latter means that $h$ consists only of harmonics of degree $0$ and $1$. However, by Theorem \ref{azw1a2},   these harmonics must be zero. Hence, $h(x)\equiv 0$, which contradicts the Armitage-Goldstein's result.

\section{The Cormack-Quinto Transform}\label{Cormack-Quinto}

The  Cormack-Quinto transform
\be\label {Corma} (\Q f) (x)=\intl_{\sn} f (x+|x|\,\th)\, d_*\th \ee
assigns to a function $f$ on  $\rn$ the mean values of $f$ over  spheres passing to the origin.   In (\ref{Corma}), $f$ is integrated   over the sphere of radius $|x|$ with center at $x$.

There is a remarkable connection between (\ref{Corma}) and the dual Radon
transform (\ref{durt}).
\begin{lemma} \label {CormaQ} \cite {CoQ, Ru14}  Let $n\ge 2$. Then
\be\label {Corma2}
(\Q f)  (x)=  |x|^{2-n}(R^*\vp)(x), \qquad \vp (\th, t)=(2|t|)^{n-2} \,
  f(2t\th),\ee
provided that  either side of this equality exists in the Lebesgue sense.
\end{lemma}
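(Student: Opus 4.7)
The plan is to interpret both sides as weighted surface averages over the same sphere $S_x = \{y \in \rn : |y-x| = |x|\}$ through the origin with center $x$, and then match the two parametrizations via a double-angle substitution on $\sn$.

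First I would substitute $\vp(\th, t) = (2|t|)^{n-2} f(2t\th)$ into the definition (\ref{durt}) of the dual Radon transform. Since $2(x\cdot\th)\th$ and $(2|x\cdot\th|)^{n-2}$ are invariant under $\th \mapsto -\th$, the integrand is even, and I can restrict the integration to the hemisphere $\Om_x^+ = \{\th \in \sn: x\cdot\th > 0\}$, picking up a factor of $2$:
\[
(R^*\vp)(x) = \frac{2}{\sigma_{n-1}}\intl_{\Om_x^+} (2\,x\cdot\th)^{n-2} f(2(x\cdot\th)\th)\, d\th.
\]
On $\Om_x^+$ I would introduce spherical coordinates with pole $u = x/|x|$: write $\th = \cos\alpha\, u + \sin\alpha\, v$ with $\alpha \in (0, \pi/2)$ and $v \in S^{n-2} \cap u^\perp$, so that $x\cdot\th = |x|\cos\alpha$ and $d\th = \sin^{n-2}\alpha\, d\alpha\, dv$. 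A direct computation using the double-angle formulas gives
\[
2(x\cdot\th)\th = 2|x|\cos\alpha(\cos\alpha\, u + \sin\alpha\, v) = x + |x|\psi, \qquad \psi := \cos 2\alpha\, u + \sin 2\alpha\, v,
\]
placing the argument of $f$ in exactly the form $x + |x|\psi$ demanded by the Cormack-Quinto transform.

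The key step is the change of variable $\beta = 2\alpha \in (0, \pi)$, under which $\psi = \cos\beta\, u + \sin\beta\, v$ becomes a standard spherical parametrization of all of $\sn$ with $d\psi = \sin^{n-2}\beta\, d\beta\, dv$. Using $d\alpha = d\beta/2$ and the half-angle identity $2\sin\alpha\cos\alpha = \sin\beta$, the weight and Jacobian collapse according to
\[
(2|x|\cos\alpha)^{n-2}\sin^{n-2}\alpha\, d\alpha = \tfrac{1}{2}|x|^{n-2}(2\sin\alpha\cos\alpha)^{n-2}\, d\beta = \tfrac{1}{2}|x|^{n-2}\sin^{n-2}\beta\, d\beta.
\]
The factor $\tfrac{1}{2}$ cancels the factor of $2$ inherited from the hemisphere reduction, yielding
\[
(R^*\vp)(x) = \frac{|x|^{n-2}}{\sigma_{n-1}}\intl_{\sn} f(x+|x|\psi)\, d\psi = |x|^{n-2}(\Q f)(x),
\]
which rearranges to the identity in the lemma.

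The main obstacle is the algebraic bookkeeping at the change-of-variables step, and in particular the recognition that $\th \mapsto 2(x\cdot\th)\th$, restricted to $\Om_x^+$, is a diffeomorphism onto $S_x \setminus \{0\}$ whose Jacobian is governed by the double-angle substitution $\beta = 2\alpha$. Once this geometric picture is in place, all steps preserve non-negativity, which justifies the Lebesgue-sense equivalence claimed by the clause ``provided either side of this equality exists''; a nonnegative test via Tonelli shows that finiteness of one integral forces finiteness of the other.
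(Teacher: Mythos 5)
Your computation is correct: the substitution $\th=\cos\alpha\,u+\sin\alpha\,v$, the identity $2(x\cdot\th)\th=x+|x|(\cos 2\alpha\,u+\sin 2\alpha\,v)$, and the collapse $(2|x|\cos\alpha)^{n-2}\sin^{n-2}\alpha\,d\alpha=\tfrac12|x|^{n-2}\sin^{n-2}\beta\,d\beta$ all check out, and the hemisphere factor of $2$ cancels as you say. The paper itself gives no proof of this lemma (it defers to \cite{CoQ, Ru14}), but your double-angle argument is precisely the standard derivation used in those references, so this is essentially the same approach.
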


A consequence of Lemma \ref{CormaQ} is the following description of the kernel of $\Q$ inherited from Theorem  \ref{zaeh}.
We recall that the Fourier-Laplace coefficients of $f$ are defined by
\be \label{zaehl1Qui}  f_{m,\mu} (r)=\intl_{S^{n-1}}f (r\th)\, Y_{m,\mu} (\th)\, d\th, \qquad r>0. \ee

\begin{theorem} \label {zaehQuin} Given $0<a\le \infty$,  suppose that
\be\label {Corma3f} \intl_{|x|<a_1} \frac{|f  (x)|}{|x|}\, dx <\infty\quad \forall \, a_1<2a,\ee
and let $(\Q f)(x)=0$ for almost all $x\in B_a^+$. Then  all the Fourier-Laplace coefficients (\ref{zaehl1Qui})
have the form
\be\label{azw2INTROQ} f_{m,\mu} (r)=\left\{\begin{array} {ll} 0 \!&if\quad m\!=\!0,1,\\
\sum\limits_{j=1}^{[m/2]} c_{j}\, r^{m-2j} &if\quad m\!\ge \!2,
\end{array} \right.\ee
for almost all $r\in (0,2a)$ with  some constants $c_j$. Conversely, for any constants $c_j$ and any  $f$ satisfying (\ref{Corma3f}) and (\ref{azw2INTROQ}), we have $(\Q f)(x)=0$ a.e. on $B_a^+$.
\end{theorem}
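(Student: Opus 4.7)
The plan is to reduce Theorem \ref{zaehQuin} to Theorem \ref{zaeh} via the projective relation recorded in Lemma \ref{CormaQ}. Given $f$ satisfying (\ref{Corma3f}), set
\[ \vp(\th, t) = (2|t|)^{n-2} f(2t\th), \qquad (\th, t) \in C_a^+. \]
The identity $\vp(-\th, -t) = \vp(\th, t)$ is immediate, so $\vp$ is even. To apply Theorem \ref{zaeh} I need $\vp \in L^1_{loc}(C_a^+)$: for $a_1 < a$, Fubini and the substitution $r = 2t$ give
\[
\intl_{|t|<a_1}\intl_{\sn} |\vp(\th, t)|\, d\th\, dt = 2^{2-n}\intl_{|x|<2a_1} \frac{|f(x)|}{|x|}\, dx,
\]
which is finite by (\ref{Corma3f}) since $2a_1 < 2a$. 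Thus $\vp$ meets the hypotheses of Theorem \ref{zaeh}.

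By Lemma \ref{CormaQ}, $(R^*\vp)(x) = |x|^{n-2}(\Q f)(x)$ away from the origin, so the assumption $\Q f = 0$ a.e.\ on $B_a^+$ yields $R^*\vp = 0$ a.e.\ on $B_a^+$. Theorem \ref{zaeh}(ii), applied to the even locally integrable $\vp$, then shows that for each $m$ and each $\mu \in \{1,\dots,d_n(m)\}$ and a.e.\ $t \in (0, a)$: $\vp_{m,\mu}(t) = 0$ if $m = 0, 1$, and $\vp_{m,\mu}(t) = \sum_{j=1}^{[m/2]} c_j\, t^{m-2j}$ if $m \ge 2$.

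To pass from $\vp_{m,\mu}$ back to $f_{m,\mu}$, I expand the definition of $\vp$ in spherical harmonics. For $t > 0$,
\[
\vp_{m,\mu}(t) = (2t)^{n-2}\intl_{\sn} f(2t\th)\, Y_{m,\mu}(\th)\, d\th = (2t)^{n-2} f_{m,\mu}(2t);
\]
the parity $\vp_{m,\mu}(-t) = (-1)^m\vp_{m,\mu}(t)$ (inherited from evenness of $\vp$) handles $t<0$. Setting $r = 2t \in (0, 2a)$ determines $f_{m,\mu}(r)$ explicitly from the $c_j$, producing (\ref{azw2INTROQ}) after relabelling constants. For the converse, given $f$ satisfying (\ref{Corma3f}) with $f_{m,\mu}$ of the prescribed form, the same identity shows $\vp_{m,\mu}$ fits the shape required by Theorem \ref{zaeh}(i); hence $R^*\vp = 0$ a.e.\ on $B_a^+$ and therefore $\Q f = 0$ a.e.\ on $B_a^+$.

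The main technical point is the integrability verification in the first step: the growth condition (\ref{Corma3f}) is tuned precisely so that the change of variables $r = 2t$ places $\vp$ in $L^1_{loc}(C_a^+)$, the class where Theorem \ref{zaeh} is available. Once this is done, the remainder of the argument is a direct substitution riding on Lemma \ref{CormaQ} and the spherical harmonic expansion.
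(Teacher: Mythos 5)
Your overall route --- transferring the problem to the dual Radon transform through Lemma \ref{CormaQ} and then invoking Theorem \ref{zaeh} --- is exactly the reduction the paper intends, and the preparatory steps (evenness of $\vp$, and the observation that (\ref{Corma3f}) is precisely the statement $\vp\in L^1_{loc}(C_a^+)$ after the substitution $r=2t$) are sound, up to an inessential slip in the constant: the integral $\intl_{|t|<a_1}\intl_{\sn}|\vp(\th,t)|\,d\th\,dt$ equals $\intl_{|x|<2a_1}|f(x)|\,|x|^{-1}\,dx$ exactly, not $2^{2-n}$ times it.

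The gap is in the final step, which you dispose of with ``producing (\ref{azw2INTROQ}) after relabelling constants.'' Carrying out the substitution you yourself wrote down, $\vp_{m,\mu}(t)=(2t)^{n-2}f_{m,\mu}(2t)$, the conclusion $\vp_{m,\mu}(t)=\sum_{j=1}^{[m/2]}c_j\,t^{m-2j}$ of Theorem \ref{zaeh}(ii) becomes
\[
f_{m,\mu}(r)=r^{2-n}\sum_{j=1}^{[m/2]}c_j\,(r/2)^{m-2j}=\sum_{j=1}^{[m/2]}\tilde c_j\, r^{\,m-2j+2-n},
\]
and relabelling constants cannot absorb the extra factor $r^{2-n}$. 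So your argument actually yields the exponents $m-2j+2-n$, which agree with the exponents $m-2j$ displayed in (\ref{azw2INTROQ}) only when $n=2$. The same mismatch breaks your converse direction: if $f_{m,\mu}(r)=\sum_j c_j r^{m-2j}$, then $\vp_{m,\mu}(t)=\sum_j c_j' t^{m-2j+n-2}$, which is not of the form required by Theorem \ref{zaeh}(i) when $n\ge 3$. A direct check shows that $m-2j+2-n$ is indeed the right exponent: for $n=3$, $m=2$, the function $f(y)=|y|^{-3}\bigl(3y_3^2-|y|^2\bigr)$ satisfies $(\Q f)(e_3)=0$, whereas $f(y)=|y|^{-2}\bigl(3y_3^2-|y|^2\bigr)$ gives $(\Q f)(e_3)=1/2$. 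This is also what the Kelvin-transform correspondence with the exterior Radon transform predicts, since the kernel in Theorem \ref{azw1a2} consists of the powers $r^{2j-m-n}$. You must therefore either exhibit where the factor $r^{2-n}$ disappears, or record that the formula being proved should carry the exponent $m-2j+2-n$ in place of $m-2j$; as written, the proof does not establish (\ref{azw2INTROQ}) for $n\ge 3$.
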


\section{The Funk Transform}\label{Connection}

 The Funk transform $F$ assigns to a function $f$ on a sphere the integrals of $f$ over ``equators''. Specifically, for the  unit sphere $S^n$ in $\bbr^{n+1}$ we have
 \be\label{Funk.aa}
 (Ff)(\th)=\!\!\!
 \intl_{\{  \sig\in S^{n} : \,
\th \cdot \sig =0\}} \!\!\!\! f(\sig) \,d_\th \sig,
\ee
where $d_\th \sig$ stands for the $O(n+1)$-invariant probability measure on the $(n-1)$-dimensional section $\{\sig \in S^{n}:
\th \cdot \sig =0\}$; see, e.g., \cite{GGG2, H11, Ru15}.

Let $e_1, \ldots, e_{n+1}$ be the coordinate unit vectors, $\rn\!=\! \bbr e_1 \oplus \cdots \oplus \bbr e_{n}$,
\be\label{r hemisphere}   S^{n}_+=\{ \th=(\th_1, \ldots, \th_{n+1})\in S^{n}: 0<\th_{n+1} \le 1\}.\ee
It is known that $\ker F=\{0\}$ if the action of $F$ is considered on even integrable functions. Below we prove that the structure of the kernel of $F$ is different  if the functions under consideration  allow  non-integrable singularities at the poles $\pm e_{n+1}$, so that the Funk transform still exists in the a.e. sense.

 Consider the projection map
 \be\label{Con22on} \rn \ni x \xrightarrow{\;\mu\;} \th\in S^{n}_+, \qquad \th=\mu (x)=\frac{x+e_{n+1}}{|x+e_{n+1}|}.\ee
This  map  extends to the bijection $\tilde \mu$ from the set $\Pi_n$ of all unoriented hyperplanes in $\rn$ onto the set
    \be\label{Con22on5} \tilde S^{n}_+=\{\th =(\th_1, \ldots, \th_{n+1})\in S^{n}: \,  0\le \th_{n+1}<1\}.\ee
cf. (\ref{r hemisphere}). Specifically, if $\t =\{x\in \rn: x\cdot \eta =t\}\in \Pi_n$, $\eta\in \sn \subset \rn$, $t\ge 0$, and $\tilde \t$ is the $n$-dimensional subspace containing the lifted plane $\t +e_{n+1}$, then $\th \in \tilde S^{n}_+$ is defined to be a normal vector to $\tilde \t$.

The above notation is  used in the following theorem.

\begin{theorem} \label{Con22on22} \cite[Theorem 6.1]{Ru14} Let $g(x) = (1+|x|^2)^{-n/2} f(\mu (x))$, $x\in \rn$, where $f$ is an even function  on $S^{n}$.  The Funk transform $F$  and the  Radon transform $R$ are related by the formula
\be\label{Con22on2} (Ff)(\th)= \frac{2}{\sig_{n-1}\, \sin d (\th, e_{n+1})}\, (Rg)(\tilde \mu^{-1} \th),  \qquad \th \in \tilde S^{n}_+,\ee
where $d (\th, e_{n+1})$ is the geodesic distance between $\th$ and $e_{n+1}$.
\end{theorem}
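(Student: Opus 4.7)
The plan is a direct change-of-variables argument reducing the Funk integral over the equator $\{\sigma \in S^n : \th \cdot \sigma = 0\}$ to the Radon integral of $g$ over the corresponding hyperplane $\tilde\mu^{-1}\th \subset \rn$.

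\emph{Step 1 (Geometric correspondence).} For $\tau = \{x \in \rn : x \cdot \eta = t\} \in \Pi_n$ with $\eta \in \sn$ and $t \ge 0$, I would impose $\th \cdot (x, 1) = 0$ for every $x \in \tau$. Writing $\th = (\th', \th_{n+1}) \in \bbr^n \oplus \bbr$ with $|\th| = 1$ and $\th_{n+1} \ge 0$, this uniquely forces
\[
\th = \tilde\mu^{-1}\tau = \frac{(-\eta,\, t)}{\sqrt{1+t^2}},
\]
so in particular $\cos d(\th, e_{n+1}) = \th_{n+1} = t/\sqrt{1+t^2}$ and therefore $\sin d(\th, e_{n+1}) = 1/\sqrt{1+t^2}$.

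\emph{Step 2 (Jacobian of $\mu|_\tau$).} The map $\mu(x) = (x,1)/\sqrt{1+|x|^2}$ sends $\rn$ diffeomorphically onto the open upper hemisphere, and its restriction to $\tau$ parametrizes $\{\sigma \in S^n : \th \cdot \sigma = 0\} \cap S^n_+$. Choosing Cartesian coordinates on $\rn$ with $\eta$ along the first axis (so $x = (t, x_2, \ldots, x_n)$ on $\tau$) and differentiating coordinate-wise yields the pullback metric
\[
h_{ij} = \partial_i \mu \cdot \partial_j \mu = \frac{\delta_{ij}}{1+|x|^2} - \frac{x_i x_j}{(1+|x|^2)^2}, \qquad i,j = 2, \ldots, n.
\]
Using $|\bar x|^2 = |x|^2 - t^2$ with $\bar x = (x_2, \ldots, x_n)$ together with the standard rank-one determinant identity gives $\det h = (1+t^2)/(1+|x|^2)^n$, hence
\[
d\sigma\big|_{\mathrm{equator}\,\cap\, S^n_+} = \frac{\sqrt{1+t^2}}{(1+|x|^2)^{n/2}}\, dx\big|_\tau.
\]

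\emph{Step 3 (Assembly).} Since $f$ is assumed even and $\sigma \mapsto -\sigma$ preserves the equator while swapping its two open halves, $\int_{\mathrm{equator}} f \, d\sigma = 2 \int_{\mathrm{equator}\, \cap\, S^n_+} f\, d\sigma$. Substituting the Jacobian from Step 2 and recalling $g(x) = (1+|x|^2)^{-n/2} f(\mu(x))$,
\[
\sigma_{n-1} (Ff)(\th) = \int_{\mathrm{equator}} f\, d\sigma = 2\sqrt{1+t^2} \int_\tau g(x)\, dx = 2\sqrt{1+t^2}\, (Rg)(\tilde\mu^{-1}\th),
\]
and invoking $\sqrt{1+t^2} = 1/\sin d(\th, e_{n+1})$ from Step 1 produces the claimed identity. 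The main obstacle is the determinant calculation in Step 2: the hyperplane $\tau$ does not pass through the origin when $t > 0$, and the constraint $x \cdot \eta = t$ must be threaded carefully through the reduction $|\bar x|^2 = |x|^2 - t^2$, for this is exactly where the geometric factor $\sqrt{1+t^2}$ enters. Everything else amounts to unpacking the definitions of $\mu$, $\tilde\mu$, and the Funk transform's probability normalization.
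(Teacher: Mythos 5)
Your argument is correct, and in fact the paper contains no proof of this statement at all: Theorem \ref{Con22on22} is imported verbatim from \cite[Theorem 6.1]{Ru14}, so there is nothing internal to compare against. Your direct verification stands on its own. I checked the three steps: the normal-vector computation $\tilde\mu^{-1}\tau=(-\eta,t)/\sqrt{1+t^2}$ and the resulting $\sin d(\th,e_{n+1})=1/\sqrt{1+t^2}$ are right; the pullback metric $h_{ij}=\delta_{ij}(1+|x|^2)^{-1}-x_ix_j(1+|x|^2)^{-2}$ restricted to $i,j\ge 2$ has determinant $(1+t^2)/(1+|x|^2)^{n}$ by the rank-one formula $\det(I-vv^{T}/\rho^2)=1-|v|^2/\rho^2$ with $|v|^2=|x|^2-t^2$, which is exactly where the factor $\sqrt{1+t^2}$ is born; and the evenness of $f$ together with the fact that the equator meets $\{\sigma_{n+1}=0\}$ in a set of measure zero (since $\th\ne\pm e_{n+1}$ for $\th\in\tilde S^n_+$) justifies doubling the integral over the upper half. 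The only point worth making explicit is the normalization: since $d_\th\sigma$ is the invariant \emph{probability} measure on a great $(n-1)$-sphere of unit radius, it equals the Riemannian surface measure divided by $\sigma_{n-1}$, which is what produces the $2/\sigma_{n-1}$ in front. A quick sanity check with $f\equiv 1$, for which $(Ff)(\th)=1$ and $(Rg)(\tau)=\tfrac{1}{2}\sigma_{n-1}(1+t^2)^{-1/2}$, confirms the constants.
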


Theorems \ref{azw1a2} and \ref{Con22on22} yield the corresponding result for the kernel of the operator $F$.
For $f$  even, it suffices to consider the points
 $\th\in S^n$,  which are represented in the spherical polar coordinates as
\[\th = \eta\, \sin \psi  + e_{n+1} \, \cos \,\psi, \qquad \eta \in \sn, \qquad 0<\psi<\pi/2.\]
The corresponding Fourier-Laplace coefficients (in the $\eta$-variable) have the form
\be\label{TPNY} f_{m,\mu} (\psi)= \intl_{\sn} f(\eta\, \sin \psi  + e_{n+1} \, \cos \,\psi)\, Y_{m,\mu} (\eta)\, d\eta.\ee

\begin{theorem} \label {786NGR1SP}  Suppose that
\be\label{TPNY1del} \intl_{|\th_{n+1}|<\a} |f(\th)| \,  d\th <\infty\quad \mbox{for  all $\;\a\in (0,1)$}\ee
and $(F f)(\th)=0$  a.e. on $S^n$. Then  all the Fourier-Laplace coefficients (\ref{TPNY})
have the form
\be\label{TUUNY1del} f_{m,\mu} (\psi)\stackrel{a.e.}{=}\left\{\begin{array} {ll} 0 &if\quad m=0,1,\\
\sin^{-n} \psi  \sum\limits_{j=1}^{[m/2]} c_{j}\, \cot^{m-2j}\psi  &if\quad m\ge 2,
\end{array} \right.\ee
with  some constants $c_j$. Conversely, for any constants $c_j$ and any  $f$ satisfying (\ref{TPNY1del}) and (\ref{TUUNY1del}), we have $(F f)(\th)=0$  a.e. on $S^n$.
\end{theorem}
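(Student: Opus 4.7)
My plan is to reduce to Theorem \ref{azw1a2} with $a=0$ through the projection identity (\ref{Con22on2}) of Theorem \ref{Con22on22}. Set $g(x) = (1+|x|^2)^{-n/2} f(\mu(x))$ on $\rn$. The factor $2/(\sig_{n-1}\sin d(\th, e_{n+1}))$ in (\ref{Con22on2}) is positive and finite for $\th$ off the poles, and $\tilde\mu$ is a bijection between $\Pi_n$ and $\tilde S^n_+$, so $Ff=0$ a.e.\ on $S^n$ is equivalent (using evenness of $f$ to cover the lower hemisphere) to $Rg=0$ a.e.\ on $\Pi_n$. The main work is to transfer the integrability hypothesis and the kernel description between $f$ and $g$.

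For the hypotheses I would carry out the stereographic change of variables. From $\th_{n+1}=(1+|x|^2)^{-1/2}$ and $|x|=\sqrt{1-\th_{n+1}^2}/\th_{n+1}$ one obtains the Jacobian $dx=\th_{n+1}^{-(n+1)}\,d\th$ (via $\partial\th'/\partial x=r^{-1}(I-\th'(\th')^T)$, $\th'=(\th_1,\ldots,\th_n)$, combined with $d\th=d\th'/\th_{n+1}$). Direct substitution gives
\[
\intl_{|x|>a_1}\frac{|g(x)|}{|x|}\,dx=\intl_{0<\th_{n+1}<\a(a_1)}\frac{|f(\th)|}{\sqrt{1-\th_{n+1}^2}}\,d\th,\qquad \a(a_1)=(1+a_1^2)^{-1/2}.
\]
As $a_1$ traverses $(0,\infty)$, $\a(a_1)$ covers $(0,1)$, and $1/\sqrt{1-\th_{n+1}^2}$ is bounded on $\{\th_{n+1}<\a\}$ for each $\a<1$; hence (\ref{TPNY1del}) is equivalent to (\ref{azw1X}) for $g$ with $a=0$.

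For the Fourier--Laplace side, write $x=r\eta$ with $\eta\in\sn$ and set $\psi=\arctan r$, so that $\mu(r\eta)=\eta\sin\psi+e_{n+1}\cos\psi$ and $g(r\eta)=\cos^n\psi\cdot f(\eta\sin\psi+e_{n+1}\cos\psi)$. Integration against $Y_{m,\mu}(\eta)$ over $\sn$ yields the bridge
\[
g_{m,\mu}(r)=\cos^n\psi\cdot f_{m,\mu}(\psi),\qquad r=\tan\psi.
\]
By Theorem \ref{azw1a2}, $g_{m,\mu}(r)=0$ for $m=0,1$ and $g_{m,\mu}(r)=\sum_{j=1}^{[m/2]}c_j\,r^{2j-m-n}$ for $m\ge 2$. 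Multiplying by $(1+r^2)^{n/2}=\cos^{-n}\psi$ and using the trigonometric identity $(1+r^2)^{n/2}\,r^{2j-m-n}=\sin^{-n}\psi\cdot\cot^{m-2j}\psi$ yields (\ref{TUUNY1del}). The converse reverses this chain: given $f_{m,\mu}$ of the form (\ref{TUUNY1del}), the bridge produces $g_{m,\mu}$ of form (\ref{azw2}); the converse part of Theorem \ref{azw1a2} gives $Rg=0$ a.e., and then (\ref{Con22on2}) forces $Ff=0$ a.e.

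The principal technical point will be the careful bookkeeping of the stereographic change of variables in the second step---pinning down the Jacobian $\th_{n+1}^{-(n+1)}$ and verifying that (\ref{TPNY1del}) matches (\ref{azw1X}) across the two pictures. Once that correspondence and the bridge identity $g_{m,\mu}(r)=\cos^n\psi\,f_{m,\mu}(\psi)$ are in place, the kernel description follows mechanically from Theorem \ref{azw1a2} by algebraic substitution.
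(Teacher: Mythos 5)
Your proposal is correct and follows exactly the route the paper intends: the paper's proof of Theorem \ref{786NGR1SP} consists precisely of the remark that Theorems \ref{azw1a2} and \ref{Con22on22} yield the result via the substitution $g(x)=(1+|x|^2)^{-n/2}f(\mu(x))$, and your computations (the Jacobian $dx=\th_{n+1}^{-(n+1)}d\th$, the equivalence of (\ref{TPNY1del}) with (\ref{azw1X}) for $a=0$, and the bridge $g_{m,\mu}(r)=\cos^n\psi\, f_{m,\mu}(\psi)$ with $r=\tan\psi$) correctly supply the details the paper leaves implicit.
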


\section{The Spherical Slice Transform}\label{Slice}

The spherical slice  transform
\be\label{Sliceint1}
(\frS f) (\gam)=\intl_{\gam} f(\eta)\, d_{\gam}\eta,  \ee
assigns to a function $f$ on  $S^n$ the integrals of $f$ over $(n-1)$-dimensional geodesic spheres   $\gam \subset S^n$  passing through the north pole $e_{n+1}$.
Every geodesic sphere $\gam$  can be indexed  by  its center $\xi=(\xi_1, \ldots, \xi_{n+1})$ in the
the closed  hemisphere
\[
\bar S^n_+=\{\xi=(\xi_1, \ldots \xi_{n+1})\in S^n:\; 0\le \xi_{n+1}\le 1\},\]
so that
 \[\gam \equiv \gam(\xi)=\{\eta \in S^n: \eta \cdot \xi =e_{n+1} \cdot \xi\},    \qquad \xi \in \bar S^n_+.\]
Using spherical polar coordinates, for $\xi \in \bar S^n_+ $ we  write
\[ \xi= \th \sin \psi +e_{n+1}\, \cos \, \psi, \qquad  \th\in \sn \subset \rn, \quad 0\le \psi \le \pi/2,\]
\[\gam \equiv\gam(\xi)\equiv\gam(\th, \psi), \qquad (\frS f) (\gam)\equiv(\frS f) (\xi)\equiv (\frS f) (\th, \psi).\] Then
 \[\gam(\xi)=\{\eta \in S^n: \eta \cdot \xi =\cos \, \psi\}.\]

Consider the bijective mapping
\be\label{MUIT} \rn \ni x \xrightarrow{\;\nu\;} \eta\in S^{n}\setminus \{e_{n+1}\},  \qquad \nu (x)=\frac{2x+(|x|^2-1)\,e_{n+1}}
{|x|^2+1}.\ee
 The inverse mapping $\nu^{-1}: S^{n}\setminus \{e_{n+1}\} \rightarrow \rn$ is the  stereographic projection  from the north pole $e_{n+1}$ onto $\rn=\bbr e_1 \oplus \cdots \oplus \bbr e_{n}$. If
 \[ \eta= \om \,\sin \vp + e_{n+1} \cos \, \vp, \qquad \om \in \sn, \qquad 0<\vp\le \pi,\]
then $x=\nu^{-1} (\eta)=s\om$, $s=\cot (\vp/2)$.

\begin{lemma} \label {stereoviatS} \cite[Lemma 7.2]{Ru14}
The spherical slice transform on $S^n$ and  the hyperplane Radon transform on $\rn$ are linked by the formula
\be \label {stereogr1r3}
(\frS f) (\theta, \psi)=(Rg)(\theta, t),  \qquad t=\cot \psi,\ee
\be \label {stereogr1r4} g(x)=\left (\frac{2}{|x|^2+1}\right)^{n-1}  (f\circ \nu) (x),\ee
provided that either side of (\ref{stereogr1r3}) is finite when $f$ is replaced by $|f|$.
\end{lemma}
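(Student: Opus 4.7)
The plan is to exhibit a direct change of variables: the stereographic projection $\nu$ is conformal, and it sends hyperplanes in $\mathbb{R}^n$ to geodesic spheres in $S^n$ through $e_{n+1}$, so the slice integral on $S^n$ pulls back to a hyperplane integral on $\mathbb{R}^n$ with an explicit Jacobian.

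First, I would identify $\nu^{-1}(\gamma(\theta,\psi))$. Setting $\xi = \theta \sin\psi + e_{n+1}\cos\psi$ and $\eta = \nu(x)$, the defining equation $\eta\cdot \xi = \cos\psi$ of the slice $\gamma(\theta,\psi)$ becomes, after clearing the denominator $|x|^2+1$ in (\ref{MUIT}) and using $\theta\cdot e_{n+1}=0$, the identity
\[
2\sin\psi\,(x\cdot\theta) + (|x|^2-1)\cos\psi = (|x|^2+1)\cos\psi,
\]
which reduces to $x\cdot\theta = \cot\psi = t$. Hence the $\nu$-preimage of the slice $\gamma(\theta,\psi)$ is precisely the hyperplane $\xi_* = \{x\in\rn: x\cdot\theta = t\}$ appearing in the definition (\ref{rtra1}) of $Rg$.

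Second, I would transfer the surface measure. The stereographic projection $\nu:\rn\to S^n\setminus\{e_{n+1}\}$ is conformal with pointwise dilation factor $\lambda(x) = 2/(|x|^2+1)$: this is standard and can be verified by a direct differentiation of (\ref{MUIT}), noting that $D\nu(x)$ acts on tangent vectors by multiplication by $\lambda(x)$ composed with an orthogonal map. Consequently, for any smooth $(n-1)$-dimensional submanifold $M\subset\rn$, the surface measure on $\nu(M)\subset S^n$ satisfies $d\sigma_{\nu(M)}(\nu(x)) = \lambda(x)^{n-1}\,d\sigma_M(x)$. Applying this with $M = \xi_*$ converts $d_\gamma\eta$ into $(2/(|x|^2+1))^{n-1}\,d_\xi x$, which is exactly the factor in (\ref{stereogr1r4}).

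Third, I would combine these two facts:
\[
(\frS f)(\theta,\psi) = \intl_{\gamma(\theta,\psi)} f(\eta)\,d_\gamma\eta = \intl_{\xi_*} f(\nu(x))\Bigl(\frac{2}{|x|^2+1}\Bigr)^{n-1}\!\!d_\xi x = (Rg)(\theta,t),
\]
with $g$ as in (\ref{stereogr1r4}) and $t=\cot\psi$. The hypothesis that one side is finite for $|f|$ in place of $f$ legitimizes the change of variables in the Lebesgue sense and guarantees that the integrals converge absolutely, so by the standard approximation/Fubini argument one can pass from the $|f|$ statement to the $f$ statement. The only mildly delicate point is confirming the conformal factor and exponent; once that is in hand, the rest is algebra. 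I would expect the verification that $\nu$ is conformal with factor $2/(|x|^2+1)$, and the corresponding $(n-1)$-power on submanifolds, to be the main technical step, though it is entirely routine in view of the explicit formula (\ref{MUIT}).
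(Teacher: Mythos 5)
Your argument is correct and is essentially the proof of the cited result: the paper itself defers to \cite[Lemma 7.2]{Ru14}, whose argument is exactly this stereographic change of variables --- the identity $\nu(x)\cdot\xi=\cos\psi \Leftrightarrow x\cdot\theta=\cot\psi$ identifying the preimage of the slice with the hyperplane, followed by the conformal factor $2/(1+|x|^2)$ raised to the power $n-1$ for the $(n-1)$-dimensional surface measure. Your computation of both ingredients checks out (the point $e_{n+1}\in\gamma$ missing from the image of $\nu$ is a null set), so nothing further is needed.
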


To describe the kernel of the operator $\frS$,  we use the spherical harmonic decomposition of
$f(\eta)=f(\om \sin \vp +e_{n+1}\, \cos \, \vp) $ in the $\om$-variable. Let
\be \label {OOgr1r3} f_{m,\mu} (\vp) =\intl_{\sn }f(\om \sin \vp +e_{n+1}\, \cos \, \vp)\, Y_{m,\mu} (\om)\, d\om.\ee
 Then   Theorems \ref{azw1a2} and  Lemma \ref{stereoviatS} imply the following  statement.

\begin{theorem} \label {zasliep}  Suppose that
\be\label{TPNY1delep} \intl_{\eta_{n+1}>1-\e}
\frac{|f(\eta)|}{(1-\eta_{n+1})^{1/2}}\,d\eta<\infty \quad \forall \, \e\in (0, 2], \ee
and $(\frS f)(\xi)=0$ for almost all  $\xi\in \bar S^n_+$. Then  all the Fourier-Laplace coefficients (\ref{OOgr1r3})
have the form
\be\label{zNYUep} f_{m,\mu} (\vp)\stackrel{a.e.}{=}\left\{\begin{array} {ll} \!0 \!&if\; m\!=\!0,1,\\
\!(1\!-\!\cos\, \vp)^{1-n}  \displaystyle{\sum\limits_{j=1}^{[m/2]} c_{j} \left (\tan \frac{\vp}{2}\right )^{n+m-2j}} \! &if\; m\!\ge \!2,
\end{array} \right.\ee
with  some constants $c_j$. Conversely, for any constants $c_j$ and any  $f$ satisfying (\ref{TPNY1delep}) and (\ref{zNYUep}), we have $(\frS f)(\xi)=0$ a.e. on  $\bar S^n_+$.
\end{theorem}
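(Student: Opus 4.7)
The plan is to reduce Theorem \ref{zasliep} to the Radon kernel result Theorem \ref{azw1a2} through the stereographic identity in Lemma \ref{stereoviatS}. Set $g(x)=(2/(|x|^2+1))^{n-1}(f\circ \nu)(x)$, so that (\ref{stereogr1r3}) gives $(\frS f)(\theta,\psi)=(Rg)(\theta,\cot\psi)$. Writing $x=s\omega$ with $\omega\in\sn$ and $\eta=\nu(x)=\omega\sin\varphi+e_{n+1}\cos\varphi$, $s=\cot(\varphi/2)$, we have $\eta_{n+1}=\cos\varphi$, $|x|^2+1=2/(1-\eta_{n+1})$, and therefore
\[
dx=(1-\eta_{n+1})^{-n}\,d\eta, \qquad \frac{1}{|x|}=\sqrt{\frac{1-\eta_{n+1}}{1+\eta_{n+1}}}, \qquad g(x)=(1-\eta_{n+1})^{n-1}f(\eta).
\]

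The first step is to translate hypothesis (\ref{TPNY1delep}) into the Radon condition (\ref{azw1X}) with $a=0$ for $g$. A direct change of variables gives
\[
\intl_{|x|>a_1}\frac{|g(x)|}{|x|}\,dx=\intl_{\eta_{n+1}>t_0}\frac{|f(\eta)|\,d\eta}{\sqrt{1-\eta_{n+1}^2}},\qquad t_0=\frac{a_1^2-1}{a_1^2+1}\in(-1,1).
\]
Since $t_0>-1$, the factor $(1+\eta_{n+1})^{-1/2}$ is bounded on the integration domain, so the right-hand side is controlled by (\ref{TPNY1delep}) with $\epsilon=1-t_0$. Hence $g$ satisfies the hypotheses of Theorem \ref{azw1a2} with $a=0$.

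Next I would establish that $Rg=0$ almost everywhere on all of $Z_n$. For $t\geq 0$ this is immediate from (\ref{stereogr1r3}) and $\frS f=0$; for $t<0$ it follows from the Radon symmetry $(Rg)(\theta,t)=(Rg)(-\theta,-t)$ together with the same identification applied at $(-\theta,-t)$. Theorem \ref{azw1a2} then delivers $g_{m,\mu}(s)=0$ a.e.\ for $m\in\{0,1\}$ and $g_{m,\mu}(s)=\sum_{j=1}^{[m/2]}c_j\,s^{2j-m-n}$ a.e.\ for $m\geq 2$. Since $g(s\omega)=(1-\cos\varphi)^{n-1}f(\omega\sin\varphi+e_{n+1}\cos\varphi)$, integration against $Y_{m,\mu}(\omega)$ over $\sn$ yields $f_{m,\mu}(\varphi)=(1-\cos\varphi)^{1-n}\,g_{m,\mu}(\cot(\varphi/2))$, and the identity $(\cot(\varphi/2))^{2j-m-n}=(\tan(\varphi/2))^{m+n-2j}$ converts this into exactly (\ref{zNYUep}).

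The converse is obtained by reversing the chain: if $f$ satisfies (\ref{TPNY1delep}) and (\ref{zNYUep}), the corresponding $g_{m,\mu}$ match the form prescribed by Theorem \ref{azw1a2}, so its converse part produces $Rg=0$ a.e.\ on $Z_n$, whence $\frS f=0$ a.e.\ on $\bar S^n_+$ via Lemma \ref{stereoviatS}. The main technical obstacle lies in the first step---namely, tracking the stereographic weights carefully to confirm that the cap-type hypothesis (\ref{TPNY1delep}) genuinely matches the radial Radon condition (\ref{azw1X}) for $g$---together with the trigonometric bookkeeping that aligns the exponents $2j-m-n$ in $g_{m,\mu}$ with the exponents $m+n-2j$ of $\tan(\varphi/2)$ appearing in (\ref{zNYUep}).
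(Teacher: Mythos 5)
Your proposal is correct and follows exactly the route the paper intends: the paper states Theorem \ref{zasliep} as a direct consequence of Theorem \ref{azw1a2} and Lemma \ref{stereoviatS} without writing out the details, and your change-of-variables computations (the weight $(1-\eta_{n+1})^{-1/2}(1+\eta_{n+1})^{-1/2}$ matching (\ref{azw1X}) with $a=0$, the extension of $Rg=0$ to $t<0$ by evenness, and the conversion of $s^{2j-m-n}$ into $(\tan(\varphi/2))^{n+m-2j}$) supply precisely those details correctly.
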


\section{The Totally Geodesic Radon Transform on the Hyperbolic Space}\label {Link wi}

  Let $\bbe^{n, 1}\sim \bbr^{n+1}$, $n\ge 2$, be  the $(n+1)$-dimensional  pseudo-Euclidean   real vector space with the inner product
\be\label {tag 2.1-HYP}[x,y] = - x_1 y_1 - \ldots -x_n y_n + x_{n +1} y_{n +1}. \ee
The real hyperbolic space $\hn$ is realized as the upper sheet of the two-sheeted hyperboloid    in $\bbe^{n, 1}$, that is,
\[\hn = \{x\in \bbe^{n,1} : \,[x,x] = 1, \ x_{n +1} > 0 \}; \]
see \cite{GGV}. The corresponding one-sheeted hyperboloid is defined by
\[\overset {*}{\bbh}{}^n = \{
x \in E^{n,1}: \,[x,x] = - 1 \}.\]
 The totally geodesic Radon transform of a function $f$ on $\hn$ is an integral operator of the form
\be\label {tag 3.1-HYP} (\frR f) (\xi) = \int\limits_{\{x\in\hn:\,[x, \xi] = 0\}}
f (x) \, d_\xi x,  \qquad \xi \in \hns, \ee
 and represents an  even function on $\hns$; see  \cite{BR1, GGV, H11}.

Using the projective equivalence of  the operator (\ref{tag 3.1-HYP}) and  the hyperplane  Radon
transform $R$, as in  \cite[Lemma 8.3]{Ru14}  (see also  \cite {Ku94, BCK}),
 we obtain the following kernel description, which is implied by  Theorem \ref{azw1a2}. We write $x\in \hn$ in the hyperbolic polar coordinates as $x = \theta\, \sh r  + e_{n+1} \, \ch r$, $\th \in \sn$, $r>0$, and consider the Fourier-Laplace coefficients
\be\label{TPNYH} f_{m,\mu} (r)= \intl_{\sn} f(\th\, \sh r + e_{n+1} \ch r)\, Y_{m,\mu} (\th)\, d\th.\ee

\begin{theorem} \label {786NGR1}  Let
\be\label{THHHep} \intl_{x_{n+1}>1+\del}
|f(x)|\, \frac{dx}{x_{n+1}} <\infty \quad \forall \,\del>0, \ee
and let $(\frR f)(\xi)=0$  a.e.  on $\hns$. Then  all the Fourier-Laplace coefficients (\ref{TPNYH})
have the form
\be\label{azw2INHY5} f_{m,\mu} (r)\stackrel{a.e.}{=}\left\{\begin{array} {ll} \!0 \!&if\; m\!=\!0,1,\\
\sh^{-n}  \sum\limits_{j=1}^{[m/2]} c_{j}\, \coth^{m-2j} \psi \! &if\; m\!\ge \!2,
\end{array} \right.\ee
with  some constants $c_j$. Conversely, for any constants $c_j$ and any  $f$ satisfying (\ref{THHHep}) and (\ref{azw2INHY5}), we have $(\frR f)(\xi)=0$  a.e.  on $\hns$.
\end{theorem}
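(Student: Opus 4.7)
The plan is to reduce the theorem to Theorem~\ref{azw1a2} via the projective equivalence between the totally geodesic Radon transform on $\hn$ and the hyperplane Radon transform on $\rn$, using the Klein model. Writing $x=\th\sh r+e_{n+1}\ch r\in\hn$ in hyperbolic polar coordinates, I would set $y=\th\tanh r\in B^n=\{|y|<1\}$ and define $g(y)=(1-|y|^2)^{-n/2}f(x(y))$ for $|y|<1$, extended by zero otherwise. As in Lemma~8.3 of~\cite{Ru14} (see also \cite{Ku94,BCK}), the totally geodesic submanifolds of $\hn$ are in bijective correspondence with hyperplanes intersecting $B^n$, and there is a non-vanishing proportionality $(\frR f)(\xi)=\kappa(\xi)\,(Rg)(\sig,\tanh s)$ for $\xi=\sig\ch s+e_{n+1}\sh s\in\hns$. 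Consequently $\frR f=0$ a.e.\ on $\hns$ is equivalent to $Rg=0$ a.e.\ on $Z_n$ (noting that $Rg$ vanishes automatically on hyperplanes disjoint from $B^n$).

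I would next verify that \eqref{THHHep} translates to condition \eqref{azw1X} for $g$ with $a=0$. A Jacobian computation based on $x_{n+1}=(1-|y|^2)^{-1/2}$ gives $dx/x_{n+1}=dy/(1-|y|^2)^{n/2}$, so that $\int_{x_{n+1}>1+\del}|f|\,dx/x_{n+1}=\int_{|y|>\rho_\del}|g(y)|\,dy$ with $\rho_\del=\sqrt{1-(1+\del)^{-2}}\in(0,1)$. Since $g$ is supported in $B^n$ and $1/|y|$ is bounded on $\{|y|>a_1\}$, finiteness of these integrals for all $\del>0$ implies $\int_{|y|>a_1}|g(y)|/|y|\,dy<\infty$ for every $a_1>0$, which is exactly \eqref{azw1X} with $a=0$.

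Theorem~\ref{azw1a2} then forces the Fourier--Laplace coefficients of $g$ to satisfy $g_{m,\mu}(\rho)=0$ for $m=0,1$, and $g_{m,\mu}(\rho)=\sum_{j=1}^{[m/2]}c_j\rho^{2j-m-n}$ for $m\ge 2$. Expanding the definition of $g$ in spherical harmonics yields $g_{m,\mu}(\tanh r)=\ch^n r\cdot f_{m,\mu}(r)$, and the elementary identity $\ch^{-n}r\cdot\tanh^{2j-m-n}r=\sh^{-n}r\cdot\coth^{m-2j}r$ converts the Euclidean power-of-$\rho$ formula into \eqref{azw2INHY5}. The converse direction follows by reversing these substitutions: any $f$ satisfying \eqref{THHHep} and \eqref{azw2INHY5} produces a $g$ satisfying \eqref{azw1X} and \eqref{azw2}, so the easy direction of Theorem~\ref{azw1a2} gives $Rg=0$ a.e.\ on $Z_n$, whence $\frR f=0$ a.e.\ on $\hns$.

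The main obstacle will be the careful bookkeeping of the weight $(1-|y|^2)^{-n/2}$: this is the unique exponent that lets a single transformation $g$ do three things at once---converting \eqref{THHHep} into \eqref{azw1X}, respecting the proportionality between $\frR f$ and $Rg$, and sending the Euclidean kernel shape \eqref{azw2} onto the hyperbolic kernel shape \eqref{azw2INHY5} through the $\sh$/$\ch$/$\coth$ identity above. Once this weight is pinned down, the theorem reduces to a direct appeal to Theorem~\ref{azw1a2}.
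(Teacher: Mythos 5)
Your proposal is correct and follows exactly the route the paper itself takes: the paper proves this theorem by invoking the projective equivalence of $\frR$ with the hyperplane Radon transform (citing \cite[Lemma 8.3]{Ru14}) and then applying Theorem \ref{azw1a2}, which is precisely your Klein-model reduction with the weight $(1-|y|^2)^{-n/2}$. Your substitutions (the Jacobian identity $dx/x_{n+1}=(1-|y|^2)^{-n/2}dy$, the translation of \eqref{THHHep} into \eqref{azw1X} with $a=0$, and the conversion of $\rho^{2j-m-n}$ into $\sh^{-n}r\,\coth^{m-2j}r$) check out and supply the details the paper leaves to the cited lemma.
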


\section{Conclusion} The list of projectively equivalent Radon-like transforms, the kernels of which admit effective characterization using the results of the present paper, can be essentially extended. For example, one can obtain kernel descriptions of the  exterior/interior analogues of the Radon-like transforms (and their duals) from Sections 4-7. We leave this useful exercise to the interested  reader.

\end{document}